\newcommand{\wv}[1]{\widetilde{v_{#1}}}
\title{Toroidal Hitomezashi patterns}
\author{Qiuyu Ren}
\address{Department of Mathematics, University of California, Berkeley, Berkeley, CA 94720, USA}
\email{qiuyu\_ren@berkeley.edu}
\author{Shengtong Zhang}
\address{Department of Mathematics, Stanford University, Stanford, CA 94305, USA}
\email{stzh1555@stanford.edu}
\begin{document}

\begin{abstract}
    Extending a proposal of Defant and Kravitz [Discrete Mathematics, \textbf{1}, 347 (2024)], we define Hitomezashi patterns and loops on a torus and provide several structural results for such loops. For a given pattern, our main theorems give optimal residual information regarding the Hitomezashi loop length, loop count, as well as possible homology classes of such loops. Special attention is paid to toroidal Hitomezashi patterns that are symmetric with respect to the diagonal $x = y$, where we establish a novel connection between Hitomezashi and knot theory.
\end{abstract}

\maketitle

\section{Introduction}
Hitomezashi, a type of Japanese style embroidery, has recently attracted attention due to its interesting mathematical properties. The mathematics of Hitomezashi was first studied by Pete back in 2004 \cite{Pete2008} under a different name. After Numberphile popularized the mathematical definition of Hitomezashi patterns in their YouTube video \cite{numberphile21}, Defant, Kravitz and Tenner \cite{defant2022loops, defant2023} discovered many interesting mathematical properties for loops in Hitomezashi patterns (``Hitomezashi loops"). For example, the length of any Hitomezashi loops is congruent to $4$ modulo $8$, and the area enclosed is congruent to $1$ modulo $4$. 

Thus far, the Hitomezashi patterns studied in \cite{defant2022loops,defant2023,numberphile21, Pete2008} are on the planar grid $\ZZ^2$. In this paper, we suggest a natural generalization of Hitomezashi patterns on a toroidal grid $\ZZ / M\ZZ \times \ZZ / N \ZZ$. We show that such patterns and loops enjoy some non-trivial combinatorial properties. Interestingly, our study of these properties explores a novel connection between Hitomezashi and knot theory.

Following \cite{defant2022loops}, we recall the definition of Hitomezashi patterns on $\ZZ^2$. 
\begin{definition}
    \label{defn:hitomezashi-original}
    Consider the graph $\Cloth_{\ZZ}$ on $\ZZ \times \ZZ$ with $(i, j)$ adjacent to $(i, j \pm 1)$ and $(i \pm 1, j)$. A \textbf{planar Hitomezashi pattern} is a subgraph of $\Cloth_{\ZZ}$ defined by two infinite sequences $\epsilon, \eta \in \{0, 1\}^{\ZZ}$, with edge set 
    $$\{\{(i, j), (i + 1, j): i \equiv \eta_j \bmod{2}\} \bigcup \{\{(i, j), (i, j + 1)\}: j \equiv \epsilon_i \bmod{2}\}.$$
    When we talk about this graph, we always consider the drawing of this graph on the plane with edges being straight lines.

    A \textbf{planar Hitomezashi loop} is a cycle in a planar Hitomezashi pattern. A \textbf{planar Hitomezashi path} is a path in a planar Hitomezashi pattern.
\end{definition}
This definition generalizes to $\ZZ / M\ZZ \times \ZZ / N \ZZ$ when $M$ and $N$ are both even. If either $M$ or $N$ is odd, the definition does not work since the parity of the coordinates is undefined. 

Instead, we use an approach based on Lemma 2.3 of \cite{defant2022loops}, which states that if we orient a Hitomezashi path $P$, then all edges of $P$ on the same horizontal / vertical line point in the same direction. This motivates the following definition. 
\begin{definition}
\label{defn:hitomezashi-toroidal}
Let $M, N\geq 3$ be integers. Consider the graph $\Cloth_{M, N}$ with vertex set $\ZZ / M\ZZ \times \ZZ / N\ZZ$, where a vertex $(i, j)$ is adjacent to $(i \pm 1, j)$ and $(i, j \pm 1)$. 

Let $x \in \{-1, 1\}^N, y \in \{-1, 1\}^M$ be binary strings. The \textbf{toroidal Hitomezashi pattern} given by $x, y$, denoted $\Cloth_{M, N}(x, y)$, is defined as the following orientation of $\Cloth_{M, N}$: an edge $\{(i, j), (i + 1, j)\}$ is oriented $(i, j) \to (i + 1, j)$ if $x_j = 1$, and oriented $(i + 1, j) \to (i, j)$ if $x_j = -1$. Symmetrically, an edge $\{(i, j), (i, j + 1)\}$ is oriented $(i, j) \to (i, j + 1)$ if $y_i = 1$, and oriented $(i, j + 1) \to (i, j)$ if $y_i = -1$.

A \textbf{toroidal Hitomezashi loop} is a circuit in the oriented graph $\Cloth_{M, N}(x,y)$ whose edges alternate between vertical and horizontal edges.\footnote{Note that when $M$ or $N$ is odd, a loop may pass a vertex twice. See the figure on the right in \cref{fig:flip_bit}.} 

A toroidal Hitomezashi pattern is \textbf{symmetric} if $M = N$ and $x = y$.
\end{definition}
We have code that generates toroidal Hitomezashi pattern at \cite{Ren_Toroidal_Hitomezashi_Patterns_2024}.

\begin{figure}[ht]
    \centering
    \includegraphics[width=0.45\textwidth]{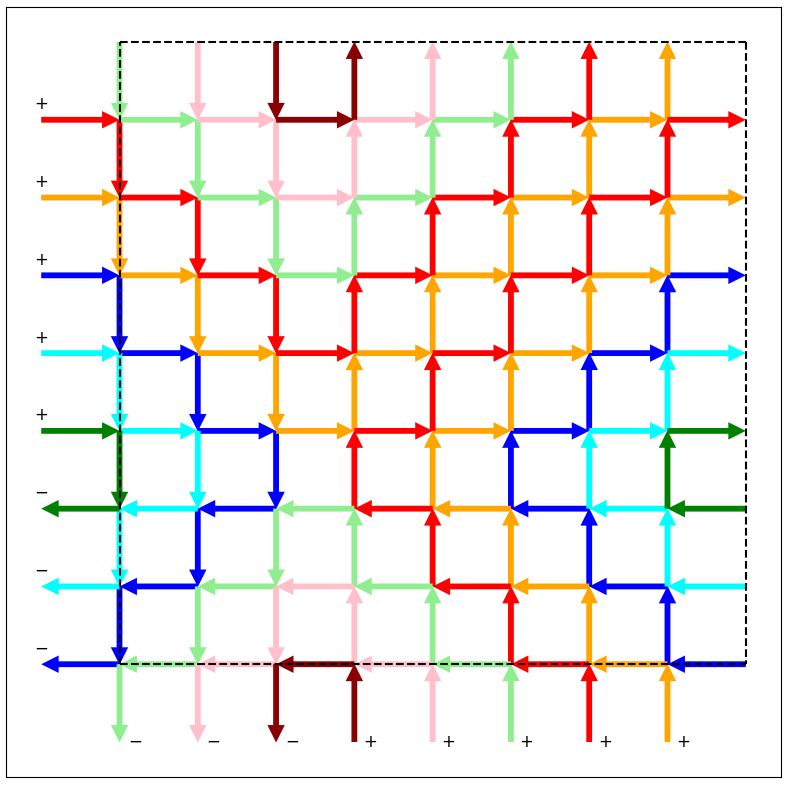}
    \caption{The symmetric toroidal Hitomezashi pattern $\Cloth_{8,8}(x,x)$ for $x=---+++++$, with Hitomezashi loops distinguished by different colors. The red and orange loops are nontrivial with homology class $(1,1)$, while the other six loops are trivial. The black dotted square represents a fundamental domain for the torus. }
    \label{fig:00011111}
\end{figure}

Let us explain the relationship between \cref{defn:hitomezashi-original} and \cref{defn:hitomezashi-toroidal}  when $M, N$ are both even. Let $C$ be a toroidal Hitomezashi pattern as in \cref{defn:hitomezashi-toroidal}. Let $A$ be the union of Hitomezashi loops in $C$ whose coordinates modulo $2$ follow the pattern $(0, 0) \to (0, 1) \to (1, 1) \to (1, 0) \to (0, 0) \to \cdots$, and let $B$ be the union of Hitomezashi loops whose coordinates modulo $2$ follow the pattern $(0, 0) \to (1, 0) \to (1, 1) \to (0, 1) \to (0, 0) \to \cdots$. When we forget the orientation, $A, B$ are Hitomezashi patterns in the sense of \cref{defn:hitomezashi-original}, and they are duals of each other in the sense of \cite[Section 7.1]{defant2022loops}. Thus, the toroidal Hitomezashi pattern in \cref{defn:hitomezashi-toroidal} decomposes into a Hitomezashi pattern in the sense of \cref{defn:hitomezashi-original} and its dual. See \cref{fig:decomposition} for an illustration.

\begin{figure}[ht]
    \centering
    \begin{minipage}{0.5\textwidth}
        \centering
        \includegraphics[width=0.9\textwidth]{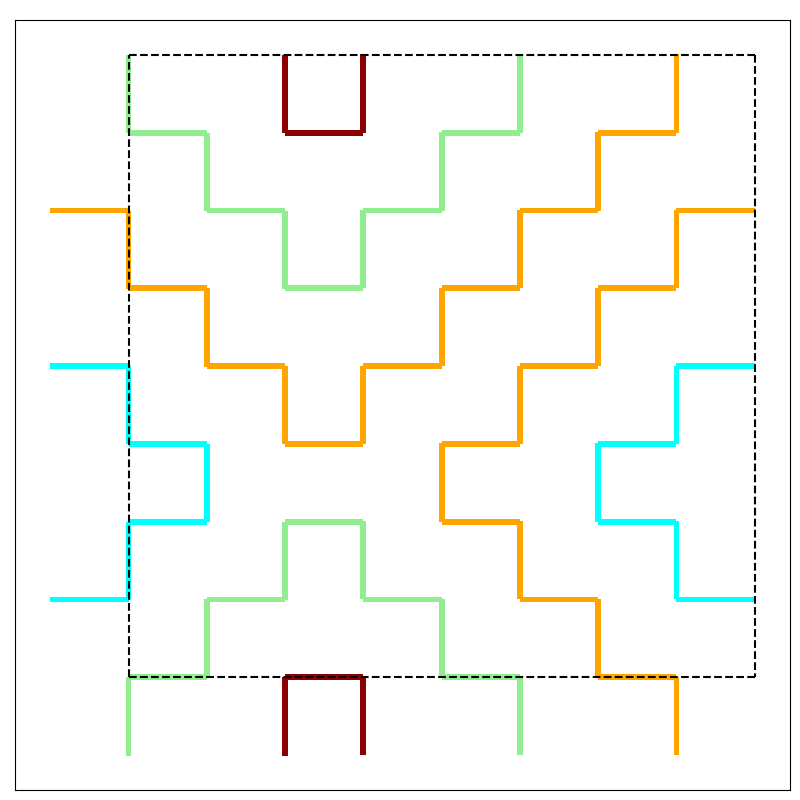} 
    \end{minipage}\hfill
    \begin{minipage}{0.5\textwidth}
        \centering
        \includegraphics[width=0.9\textwidth]{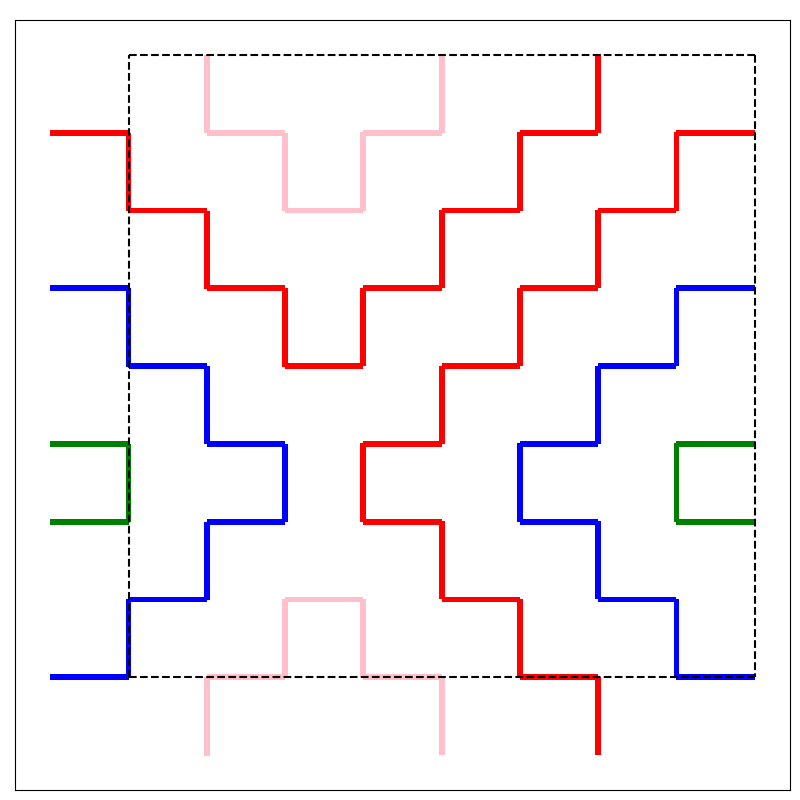} 
    \end{minipage}
    \caption{Decomposition of \cref{fig:00011111} into two Hitomezashi patterns as in \cref{defn:hitomezashi-original}.}
    \label{fig:decomposition}
\end{figure}

Perhaps the most interesting difference between planar and toroidal Hitomezashi loops is that some toroidal Hitomezashi loops are not contractible. On the torus, the isotopy class of a noncontractible simple closed curve is classified by its homology class. In our setup, the homology class of a Hitomezashi loop can be defined combinatorially. 
\begin{definition}[Combinatorial Definition of homology]
\label{defn:homology}
Given a toroidal Hitomezashi loop on $\Cloth_{M, N}(x, y)$, define its $x$-shift $\Delta x$ as the number of edges $(i, j) \to (i + 1, j)$ minus the number of edges $(i + 1, j) \to (i, j)$ on the loop, and its $y$-shift $\Delta y$ analogously. The \textbf{homology class} of a loop is defined as $(\Delta x / M, \Delta y / N)$. We call a loop \textbf{trivial} if its homology class is $(0,0)$, and \textbf{nontrivial} otherwise.
\end{definition}
We shall see that trivial toroidal Hitomezashi loops are essentially the same as planar Hitomezashi loops, while nontrivial toroidal Hitomezashi loops enjoy somewhat different properties.

We begin with some topological observations, to be used throughout the rest of the paper.

\begin{observation}\ \vspace{-8pt}
\label{obs}
\begin{enumerate}
    \item In a toroidal Hitomezashi pattern, each vertex has exactly one vertical in-edge and one horizontal in-edge. Thus, the pattern is partitioned into toroidal Hitomezashi loops, and toroidal Hitomezashi loops never crosses itself transversely.

    \item Since every toroidal Hitomezashi loop has no transversal self-intersection, and every two different toroidal Hitomezashi loops are disjoint, it is a topological fact that the homology class of any nontrivial toroidal Hitomezashi loop is $\pm(u,v)$ for some coprime integers $u,v$ independent of the chosen loop.

    \item Consider the case when the pattern is symmetric. In this case, any toroidal Hitomezashi loop cannot transversely cross the diagonal $\{(a,a)\colon a\in\RR/N\ZZ\}$ in the torus. Together with (2), we see its homology class is either $(0,0)$ or $\pm(1,1)$.
\end{enumerate}
\end{observation}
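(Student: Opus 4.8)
The plan is to exploit the reflection $\sigma\colon(i,j)\mapsto(j,i)$ across the diagonal. First I would verify that $\sigma$ is an orientation-preserving automorphism of the directed graph $\Cloth_{N,N}(x,x)$: it sends a horizontal edge (governed by $x_j$) to a vertical edge (governed by $y_j$), and the orientation conventions of \cref{defn:hitomezashi-toroidal} match up precisely because $x=y$. Consequently $\sigma$ permutes the toroidal Hitomezashi loops while fixing the diagonal pointwise, which is the structural reason the loops should ``respect'' the diagonal. Since the crossing claim is local and the straight-line drawing has all edges axis-parallel, I would next observe that the diagonal $\{(a,a)\}$ meets the grid only at the vertices $(k,k)$: a horizontal or vertical unit edge meets the line $y=x$ only at a lattice endpoint. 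So it suffices to analyze the behavior of a loop at each such diagonal vertex.

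By \cref{obs}(1), exactly two strands pass through $(k,k)$: one turning from the horizontal in-edge to the vertical out-edge, the other from the vertical in-edge to the horizontal out-edge. I would record which side of the diagonal each neighbor lies on — $(k,k+1)$ and $(k-1,k)$ lie in $\{y>x\}$, while $(k,k-1)$ and $(k+1,k)$ lie in $\{y<x\}$ — and then read off the two strands from the sign of $x_k=y_k$. A short case check (just $x_k=\pm1$) shows that in every case both edges of each strand lie on the same side of the diagonal; that is, each strand merely touches the diagonal at $(k,k)$ and bounces back rather than crossing it. This establishes part (1): no loop crosses the diagonal transversely.

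For part (2) I would pass to homology. The diagonal is a simple closed curve of class $(1,1)$, and the intersection pairing on $H_1$ of the torus sends classes $(u,v)$ and $(1,1)$ to $u-v$. Because a nontrivial loop $L$ meets the diagonal only tangentially — by part (1), at finitely many vertices, staying on one side at each — I can push $L$ off the diagonal by a small isotopy supported near those vertices, making it disjoint from the diagonal; hence $[L]\cdot(1,1)=0$, i.e. $\Delta x=\Delta y$, so the class is $(c,c)$. Invoking \cref{obs}(2), a nontrivial class equals $\pm(u,v)$ with $\gcd(u,v)=1$, and $u=v$ then forces $(u,v)=(1,1)$; together with the trivial case this yields homology $(0,0)$ or $\pm(1,1)$. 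The main obstacle is making the passage from the combinatorial statement ``no transverse crossing'' to the topological statement ``zero intersection number'' fully rigorous: one must justify that the only intersections of the drawn loop with the diagonal occur at vertices and are genuinely tangential, so that the perturbation producing a disjoint representative is legitimate and the homological intersection count is indeed $0$.
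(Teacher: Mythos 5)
Your proof of part (3) is correct and supplies exactly the reasoning the paper leaves implicit: the diagonal meets the drawn grid only at the vertices $(k,k)$, the orientation rules with $x=y$ force both strands through such a vertex to stay on (and bounce back to) their own side of the diagonal, and the vanishing intersection number with the class $(1,1)$ then combines with part (2) to yield $(0,0)$ or $\pm(1,1)$. Parts (1) and (2), which you invoke as given, are likewise treated by the paper as definitional and as a cited topological fact respectively, so your treatment takes essentially the same approach as the paper.
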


In the rest of the paper, we establish some properties concerning toroidal Hitomezashi patterns and loops. 

Let $k(x)$ denote the difference between the number of $1$'s and the number of $(-1)$'s in $x$. Let $\gcd(x, y)$ denote $\gcd(\abs{k(x)}, \abs{k(y)})$. The first property states that, when $k(x), k(y)$ are both non-zero, all nontrivial Hitomezashi loops have the same homology class, which can be expressed in terms of $k(x)$ and $k(y)$. 

\begin{theorem}[Homology class]\ \vspace{-8pt}
\label{prop:homology}
\begin{enumerate}
\item In a symmetric toroidal Hitomezashi pattern $\Cloth_{N, N}(x, x)$, every nontrivial toroidal Hitomezashi loop has homology class $(1,1)$ if $k(x) > 0$, and homology class $(-1,-1)$ if $k(x) < 0$. No nontrivial toroidal Hitomezashi loop exists if $k(x) = 0$.
\item In a general toroidal Hitomezashi pattern $\Cloth_{M, N}(x, y)$, the possible homology classes of nontrivial toroidal Hitomezashi loops are
 \begin{center}
\begin{tabular}{ |c|c|c| } 
 \hline
   & $k(x) \neq 0$ & $k(x) = 0$ \\ 
 \hline 
 $k(y) \neq 0$ & $(k(x) / \gcd(x, y), k(y) / \gcd(x, y))$ & $(0, \pm 1)$ \\ 
 \hline
 $k(y) = 0$ & $(\pm 1, 0)$ & $(0, \pm 1)$ or $(\pm 1, 0)$ \\ 
 \hline
\end{tabular}
\end{center}
\end{enumerate}
\end{theorem}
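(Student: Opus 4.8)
Throughout write $p_L=\Delta x/M$ and $q_L=\Delta y/N$, so the homology class of a loop $L$ is $(p_L,q_L)$. The plan begins with a global counting identity. Summing $\Delta x$ over \emph{all} Hitomezashi loops amounts to counting every horizontal edge once, with sign equal to the orientation of its row: an edge in row $j$ contributes $x_j$. As row $j$ carries exactly $M$ horizontal edges, $\sum_L\Delta x(L)=\sum_j Mx_j=M\,k(x)$, so $\sum_L p_L=k(x)$, and symmetrically $\sum_L q_L=k(y)$. (Equivalently, the Hitomezashi orientation is a divergence-free unit flow whose total homology class $(k(x),k(y))$ is the sum of the classes of the individual loops.) By part~(2) of \cref{obs} every nontrivial loop has class $s_L(u,v)$ for one fixed coprime pair $(u,v)$ and a sign $s_L\in\{\pm1\}$; writing $S=\sum_L s_L$ over the nontrivial loops, the identity becomes $S\,(u,v)=(k(x),k(y))$. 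If exactly one of $k(x),k(y)$ vanishes, say $k(x)=0\neq k(y)$, then $S\neq0$ and $Su=0$ force $u=0$, so coprimality gives $(u,v)=(0,\pm1)$ with both signs permitted, matching the table; the case $k(y)=0\neq k(x)$ is symmetric. If both are nonzero then $S\neq0$ and $(u,v)=S^{-1}(k(x),k(y))$, so coprimality forces $|S|=\gcd(x,y)$ and $(u,v)=\pm(k(x),k(y))/\gcd(x,y)$.

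Everything now rests on a \emph{coherent orientation} claim: when a nontrivial loop has both homology coordinates nonzero, all nontrivial loops are co-oriented, i.e. the $s_L$ share a single sign $\sigma$. Granting this in the two-nonzero case, $S=\sigma m$ with $m$ the number of nontrivial loops, and the common class is
\[
s_L(u,v)=\sigma\cdot(\sigma m)^{-1}(k(x),k(y))=(k(x),k(y))/m ,
\]
where the two signs cancel; primitivity of this class forces $m=\gcd(x,y)$ and hence the definite value $(k(x)/\gcd(x,y),\,k(y)/\gcd(x,y))$, with no residual sign ambiguity (and, as a byproduct, exactly $\gcd(x,y)$ nontrivial loops). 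The same claim settles $k(x)=k(y)=0$: if some nontrivial loop had both coordinates nonzero, coherence would give $S=\pm m\neq0$, contradicting $S(u,v)=(0,0)$ with $(u,v)\neq(0,0)$; hence every nontrivial class is axis-parallel, namely $(0,\pm1)$ or $(\pm1,0)$. Note that coherence is vacuous when a coordinate of $(u,v)$ is zero, which is exactly why the off-axis table entries correctly retain both signs.

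The hard part is the coherence claim, which is genuinely finer than homology: two disjoint parallel essential curves on the torus can be oriented oppositely, so no purely topological argument can succeed, and indeed coherence must fail when a coordinate vanishes. The plan is to exploit the rigid edge structure. Cut the torus along all nontrivial loops into annuli, each bounded by two consecutive nontrivial loops and containing only contractible loops inside; equivalently lift to $\RR^2$, where each nontrivial loop lifts to an embedded bi-infinite curve asymptotic to the line of slope $Nv/(Mu)$ and all lifts are pairwise disjoint. The key input is that the \emph{spatial} direction in which \emph{any} loop traverses a given edge is fixed by position alone---rightward across a horizontal edge in row $j$ iff $x_j=1$, upward across a vertical edge in column $i$ iff $y_i=1$---independently of the loop. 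I expect to show that when $u,v\neq0$ (so each loop meets every vertical and every horizontal cut) an adjacent pair of oppositely oriented lifts is forced to cross, contradicting disjointness; concretely this is a monotonicity, non-crossing (meander-type) argument run on the cyclic sequence of crossing signs $x_0,\dots,x_{N-1}$ read off along a single vertical cut. This is precisely the point where the knot-theoretic viewpoint of the symmetric case enters, and proving this rigidity is the main obstacle.

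Finally, part~(1) follows from part~(2) together with part~(3) of \cref{obs}. In the symmetric case $k(x)=k(y)$, so for $k(x)\neq0$ part~(2) yields $(\operatorname{sign}k(x))(1,1)$, that is $(1,1)$ when $k(x)>0$ and $(-1,-1)$ when $k(x)<0$; and for $k(x)=0$ part~(2) forces an axis-parallel class, which the third part of \cref{obs} (only $(0,0)$ or $\pm(1,1)$ occur) excludes entirely, so no nontrivial loop exists.
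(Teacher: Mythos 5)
There is a genuine gap: your entire argument funnels into the ``coherent orientation'' claim, and you never prove it --- you only sketch a plan (``I expect to show\dots'', ``proving this rigidity is the main obstacle''). Everything before that point is bookkeeping the paper also does in two lines: the identity $\sum_L(\text{class of }L)=(k(x),k(y))$ together with \cref{obs}(2) already pins the class down to $\pm(k(x)/\gcd(x,y),k(y)/\gcd(x,y))$ and settles the two cases where exactly one of $k(x),k(y)$ vanishes. The actual content of the theorem is exactly what you defer: (i) when $k(x),k(y)\neq0$, oppositely oriented nontrivial loops cannot coexist, which removes the $\pm$ ambiguity; and (ii) when $k(x)=k(y)=0$, no loop with both homology coordinates nonzero exists, which justifies the ``$(0,\pm1)$ or $(\pm1,0)$'' entry. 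Without a proof of coherence, neither of these table entries is established, so the proposal proves strictly less than \cref{prop:homology}(2), and consequently part (1) (which you correctly reduce to part (2) plus \cref{obs}(3)) is also not established in the $k(x)=0$ and $k(x)\neq 0$ cases alike.

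For comparison, the paper closes this gap with the height function of Pete on the lifted planar pattern $\Cloth_\ZZ(\tilde x,\tilde y)$, which is precisely the ``potential'' your meander-type sketch is groping for. In the case $k(x),k(y)\neq0$ (say $k(x)>0$): if loops of both signs existed, choose lifts $\tilde\gamma,\tilde\gamma'$ of an adjacent opposite pair; since no Hitomezashi path separates them, a planar path joining them crosses no path transversely, so $\tilde\gamma$ and $\tilde\gamma'$ have equal height. But after cyclically permuting $x$ so that all partial sums $\sum_{k=0}^j\tilde x_k$ are positive, one exhibits a rightward edge on $\tilde\gamma'$ at height $y=0$ and a leftward edge on $\tilde\gamma$ at height $y=j>0$ whose adjacent regions must differ in height by $\sum_{k=0}^j\tilde x_k>0$ --- a contradiction. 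In the case $k(x)=k(y)=0$ the paper does not go through coherence at all: after normalizing so partial sums of $x$ are nonpositive and those of $y$ nonnegative, all edges $\{(-1,j),(0,j)\}$ have strictly smaller height than all edges $\{(i,-1),(i,0)\}$, so a single loop cannot traverse both families, directly forbidding classes $(u,v)$ with $uv\neq0$. If you want to salvage your write-up, the height argument is the concrete realization of your crossing/rigidity heuristic; as written, the proposal is incomplete at its central step.
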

This property have two interesting consequences. First, when $k(x)$ and $k(y)$ are both nonzero, all the nontrivial Hitomezashi loops must travel in the same direction.\footnote{When $k(x) = 0$, loops with homology class $(0, 1)$ and $(0, -1)$ can coexist in the same Hitomezashi pattern. See the orange and cyan loop in the second figure of \cref{fig:loop_count}.} Second, flipping a single bit of $x$ or $y$ could have a tremendous effect on the picture of toroidal Hitomezashi loops. See \cref{fig:flip_bit} for an illustration.

\begin{figure}[ht]
    \centering
    \begin{minipage}{0.5\textwidth}
        \centering
        \includegraphics[width=0.9\textwidth]{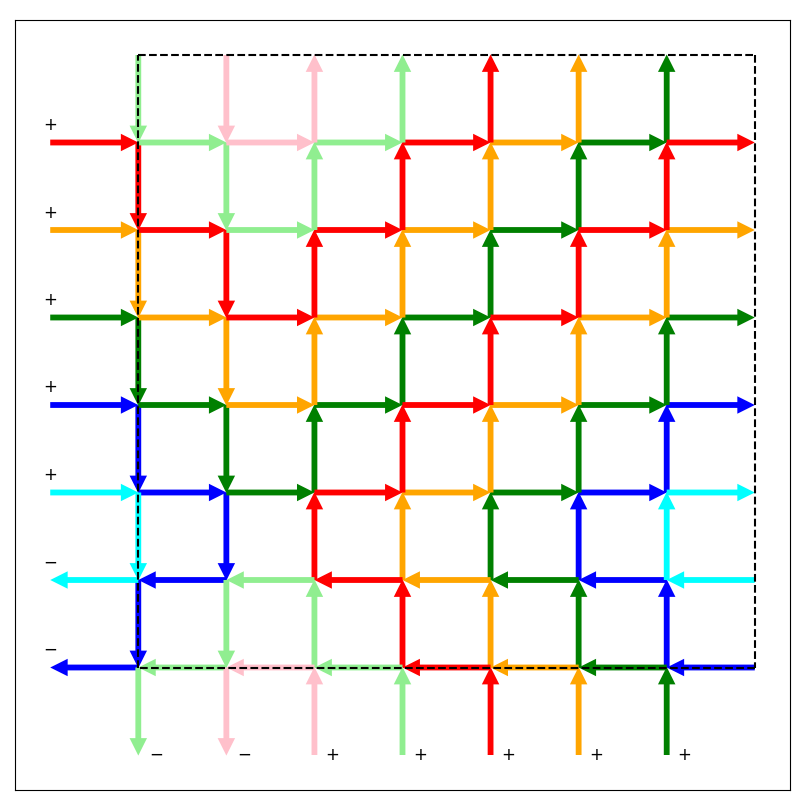} 
    \end{minipage}\hfill
    \begin{minipage}{0.5\textwidth}
        \centering
        \includegraphics[width=0.9\textwidth]{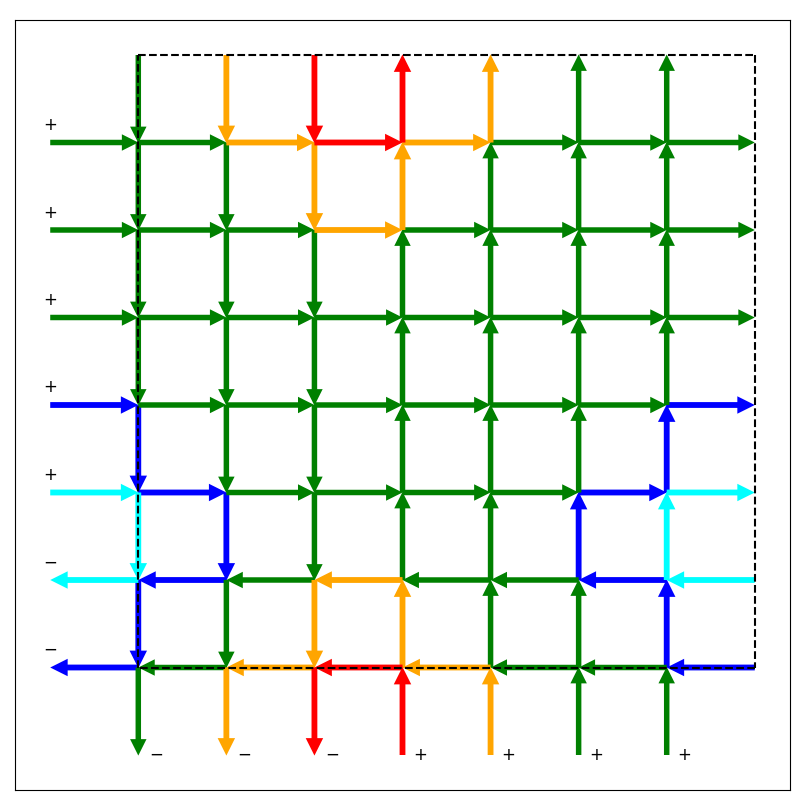} 
    \end{minipage}
    \caption{Comparison between $\Cloth_{7,7}(x, x)$ and $\Cloth_{7,7}(x, x')$, where $x = --+++++$ and $x' = ---++++$. On the right, all green edges form a single toroidal Hitomezashi loop of homology class $(3, 1)$.}
    \label{fig:flip_bit}
\end{figure}

The second property is analogous to one of the main theorems in \cite{defant2022loops}. It determines the length of toroidal Hitomezashi loops modulo $8$.

\begin{theorem}[Loop length]\ \vspace{-8pt}
\label{prop:length}
\begin{enumerate}
\item Every trivial toroidal Hitomezashi loop has length $4$ modulo $8$.
\item In a symmetric pattern $\Cloth_{N, N}(x, x)$, every nontrivial toroidal Hitomezashi loop has length $2\abs{k(x)}$ modulo 8.

\item In a general pattern $\Cloth_{M, N}(x, y)$, if a nontrivial toroidal Hitomezashi loop has homology class $(\lambda, \mu)$, then its length is congruent to $2(\mu N + \lambda M - \mu k(x))$ modulo $8$.\footnote{As $\mu k(x) = \lambda k(y)$, this expression can also be written as $2(\mu N + \lambda M) - \mu k(x) - \lambda k(y)$, so it is invariant if we switch $x$ and $y$.}
\end{enumerate}
\end{theorem}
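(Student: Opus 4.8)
The plan is to reduce everything to a single parity computation. Write $L$ for the length of the loop $\gamma$ and recall that, since the edges of $\gamma$ alternate between horizontal and vertical, $L=2m$ where $m$ is the common number of horizontal and of vertical edges; it therefore suffices to determine $m$ modulo $4$. I would first record a bookkeeping identity coming from the net winding of $\gamma$. Cutting the torus along the vertical line between columns $b$ and $b+1$, the homology class $(\lambda,\mu)$ forces the signed number of horizontal edges crossing this cut to equal $\lambda$ for every $b$; writing $\tilde w_b$ for the number of such crossings and $\tilde w_b^-$ for the leftward ones gives $\tilde w_b=\lambda+2\tilde w_b^-$, and summing over $b$ yields $m=\lambda M+2h^-$, where $h^-$ is the total number of leftward horizontal edges. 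Thus the whole theorem comes down to computing $h^-\bmod 2$, and the target value (matching the stated formula via $k(x)=N-2n_-(x)$, where $n_-(x)=\#\{j:x_j=-1\}$) is $h^-\equiv \mu\,n_-(x)+\tau\pmod 2$, with $\tau=1$ if $\gamma$ is contractible and $\tau=0$ otherwise.

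The two ingredients feeding this parity are a local edge count and the turning number. For the first, fix a height $j$ and let $a_j$ be the number of horizontal edges of $\gamma$ there. Each corner of $\gamma$ at height $j$ carries exactly one vertical edge, crossing either the cut just above or just below height $j$; matching corners to crossings shows the number of corners at height $j$ equals $w_j+w_{j-1}$, where $w_c$ counts the vertical edges crossing the horizontal cut between heights $c$ and $c+1$. Since these corners pair off into horizontal edges, $2a_j=w_j+w_{j-1}$, and using $w_c=\mu+2w_c^-$ (with $w_c^-$ the downward crossings) gives $a_j=\mu+w_j^-+w_{j-1}^-$. Summing over heights with $x_j=-1$ gives $h^-=\mu\,n_-(x)+P$ with $P=\sum_{x_j=-1}(w_j^-+w_{j-1}^-)$, so it remains to prove $P\equiv\tau\pmod 2$. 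For the second ingredient I would introduce the total turning $T=(\#\text{left turns})-(\#\text{right turns})$ over the corners. A short case check on the four incoming/outgoing direction pairs shows that a corner at vertex $(i,j)$ contributes $+x_jy_i$ if it is a horizontal-to-vertical corner and $-x_jy_i$ if vertical-to-horizontal; pairing each corner with the vertical edge it bounds, this collapses to $T=\sum_c(x_c-x_{c+1})w_c$, and after substituting $w_c=\mu+2w_c^-$ and telescoping the constant term, $T=2\sum_j(x_j-x_{j+1})w_j^-$.

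The crux, and the step I expect to be the main obstacle, is the topological evaluation of $T$: I claim $T=\pm4$ when $\gamma$ is contractible and $T=0$ otherwise, so that $T/4\equiv\tau\pmod 2$. For a genuinely embedded loop this is the Hopf Umlaufsatz (total turning $\pm2\pi$ for a curve bounding a disk) together with the fact that an essential simple closed curve on the flat torus is isotopic to a straight geodesic and hence has turning number $0$, using that distinct loops are disjoint and no loop crosses itself transversely (\cref{obs}(1)). The subtlety is that on an odd torus a loop may pass through a vertex twice (the footnote to \cref{defn:hitomezashi-toroidal}); there the two strands meet tangentially in opposite quadrants rather than crossing, so I would argue that $\gamma$ can be perturbed to an embedded curve in the same regular-homotopy class without introducing transverse crossings, leaving $T$ unchanged, and then invoke the smooth statement.

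Finally I would assemble the pieces. A direct manipulation of the two expressions for $\sum_j x_jw_j^-$ coming from $P$ and from $T$ — using only that each $x_j\equiv1\pmod 2$ — gives $P\equiv T/4\pmod 2$, hence $h^-\equiv\mu\,n_-(x)+\tau\pmod 2$ and $L=2m=2\lambda M+4h^-\equiv 2\lambda M+4\mu\,n_-(x)+4\tau\pmod 8$. For a nontrivial loop $\tau=0$ and $4\mu\,n_-(x)=2\mu(N-k(x))$, giving part (3); for a trivial loop $\lambda=\mu=0$ and $\tau=1$, giving $L\equiv4\pmod 8$ in part (1). Part (2) is then the specialization of (3) to $M=N$, $k(y)=k(x)$ and homology $(\pm1,\pm1)$, forced by \cref{prop:homology} and \cref{obs}(3); the congruence $k(x)\equiv N\pmod 2$ turns the resulting $2(2N-k(x))$ into $2\abs{k(x)}$ modulo $8$.
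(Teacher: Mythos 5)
Your proposal is correct, and it takes a genuinely different route from the paper's. The paper lifts a loop to the universal cover $\Cloth_{\ZZ}(\tilde x,\tilde y)$, chops the lifted path into $a$- and $(a,b)$-excursions (\cref{lem:excursion-in-lifting}), and computes excursion lengths modulo $8$ by induction following \cite{shortproof23}; its part (1) is not reproved but quoted from the planar theorem of \cite{defant2022loops}. You instead work entirely on the torus: the identity $L=2\lambda M+4h^-$ reduces the whole theorem to the parity of $h^-$, which you extract by playing the cut-counting identity $h^-=\mu\,n_-(x)+P$ against the turning number $T=2\sum_c(x_c-x_{c+1})w_c^-$. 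I checked the corner-sign table (a horizontal-to-vertical corner at $(i,j)$ does contribute $x_jy_i$ and a vertical-to-horizontal one $-x_jy_i$), the collapse to $T=\sum_c(x_c-x_{c+1})w_c$ (each vertical edge crossing cut $c$ contributes $x_c-x_{c+1}$ regardless of its direction), the telescoping, and the final parity step, which in closed form reads $P+T/4=\sum_c w_c^-\bigl(1-x_{c+1}\bigr)\equiv0\pmod 2$; all of these are right, as is the deduction of (1), (2), (3) from $h^-\equiv\mu\,n_-(x)+\tau\pmod 2$. The only step requiring real care is the one you flag, the evaluation $T=\pm4$ (trivial) versus $T=0$ (nontrivial), and your sketch does go through: at a doubly visited vertex the two passages occupy opposite quadrants, so rounding the corners already separates them, yielding an embedded curve with the same homology class and the same total turning; a null-homologous embedded curve on the torus is separating, hence bounds a disk (an Euler characteristic count forces one complementary piece to be a disk), so Gauss--Bonnet on a flat disk gives $T=\pm4$, while an essential embedded curve is isotopic, hence regular homotopic, to a flat geodesic, and the turning number is a regular homotopy invariant, giving $T=0$. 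What your route buys: it stays on the torus, avoids the excursion machinery entirely, and in fact \emph{reproves} the planar $4\pmod 8$ theorem rather than consuming it (a trivial toroidal loop is just a planar loop wrapped into a large torus), at the price of importing smooth-topology inputs (Umlaufsatz, the classification of simple closed curves on the flat torus) whose adaptation to tangentially self-touching lattice curves must be spelled out. The paper's route is longer but purely combinatorial and reuses lemmas that serve it elsewhere.
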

The last property is a counting result for toroidal Hitomezashi loops on $\Cloth_{M, N}(x, y)$. 
\begin{theorem}[Loop count]
\label{prop:loop-count}\ \vspace{-8pt}
\begin{enumerate}
\item In any pattern $\Cloth_{M, N}(x, y)$, if $k(x), k(y)$ are both non-zero, then the number of nontrivial toroidal Hitomezashi loops is $\gcd(x, y)$.
\item The number of trivial toroidal Hitomezashi loops in any toroidal Hitomezashi pattern is even.
\item In a symmetric pattern $\Cloth_{N, N}(x, x)$, the total number of toroidal Hitomezashi loops is congruent to $N$ modulo $4$.
\item In a symmetric pattern $\Cloth_{N, N}(x, x)$, the minimal possible total number of toroidal Hitomezashi loops is $N$.
\end{enumerate}
\end{theorem}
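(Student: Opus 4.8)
For part (1), I would argue by a flux (conservation) computation. Summing the orientation sign of every horizontal edge gives $\sum_{j}\sum_{i}x_j = M\,k(x)$, since each of the $N$ rows contains $M$ equally oriented horizontal edges; on the other hand this total equals $\sum_{\text{loops}}\Delta x$. Trivial loops contribute $0$, and by \cref{prop:homology}(2) every nontrivial loop has $\Delta x/M = k(x)/\gcd(x,y)$. Hence, when $k(x)\neq 0$, the number of nontrivial loops is $M k(x)$ divided by $M k(x)/\gcd(x,y)$, namely $\gcd(x,y)$; if $k(x)=0$ but $k(y)\neq 0$ one runs the same argument with $\Delta y$.

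Part (2) has a clean proof by a turning-number cancellation, which I find more transparent than extracting it from \cref{prop:length}. Orient each loop by its Hitomezashi orientation and record $\pm1$ for a left/right turn at each vertex it passes. A contractible loop has total turning $\pm2\pi$, i.e.\ signed turn-count $\pm 4$, while a noncontractible loop has signed turn-count $0$. At a vertex $(i,j)$ the two strands are the arc joining the horizontal in-edge to the vertical out-edge and the arc joining the vertical in-edge to the horizontal out-edge; a direct check of the four orientation cases shows these turn in opposite senses, contributing $+x_jy_i$ and $-x_jy_i$. Summing over all vertices, the grand total of signed turns is $0$, so the signed count of trivial loops vanishes and their number $t$ is even.

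For the symmetric parts the key structural input is that $\Cloth_{N,N}(x,x)$ is invariant under the reflection $\sigma\colon(i,j)\mapsto(j,i)$ across the diagonal $D=\{(a,a)\}$: one checks $\sigma$ preserves the smoothing type at every vertex, so it permutes the loops, and (as in \cref{obs}(3)) every loop lies in the open annulus $T^2\setminus D$ when $N$ is odd, resp.\ in the two annuli $T^2\setminus(D\cup D')$ when $N$ is even. Writing $c=t+s$ with $s=\abs{k(x)}$ by part (1) and $t$ even by part (2), part (3) is equivalent to $t\equiv N-\abs{k(x)}\pmod 4$, that is, to a \emph{mod $2$} statement about the $t/2$ trivial loops of each turning sign. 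I would try to obtain this refinement either by upgrading the turning computation above to a regular-homotopy (Whitney-style) invariant, or, following the knot-theoretic correspondence advertised in the abstract, by folding the $\sigma$-symmetric link along $D$ and reading off a mod-$4$ invariant (a linking/Goeritz form) of the resulting diagram. I expect this mod-$4$ refinement to be the main obstacle, since the naive turning and length-sum arguments only pin down $c$ and $t$ modulo $2$.

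Finally, in part (4) the value $N$ is attained by $x=(+,\dots,+)$: then every smoothing is of one type and the loops are the $N$ parallel $(1,1)$-staircases, so $c=N$. The content is the lower bound $c\ge N$, which cannot follow from part (3) alone, as that result does not by itself forbid a single loop when $N\equiv 1\pmod 4$. I would prove it by exhibiting $N$ distinct loops: near each diagonal vertex $(a,a)$ the smoothing is forced (since $x_a=x_a$) to join the south and east edges, producing a loop-arc hugging $D$ from the lower side, and I would use the reflection symmetry together with the annulus structure to show that these $N$ arcs lie on $N$ distinct loops, i.e.\ no loop touches $D$ from the same side twice. Establishing this injectivity — ruling out a single loop sweeping past several diagonal vertices on one side — is the crux of part (4).
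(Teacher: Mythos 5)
Your parts (1) and (2) are correct, and they coincide with the paper's own arguments: (1) is the same bookkeeping (the total homology class $(k(x),k(y))$ is distributed over nontrivial loops which, by \cref{prop:homology}(2), all have class $(k(x)/\gcd(x,y),k(y)/\gcd(x,y))$), and (2) is exactly the paper's turning-number cancellation. However, for part (3) you do not give a proof: you only restate the claim (as $t\equiv N-\abs{k(x)}\pmod 4$ for the trivial-loop count $t$) and then name two possible routes (``upgrading the turning computation,'' ``a linking/Goeritz form'') without carrying either out. The paper's proof here is a genuine two-step argument: it verifies the count is exactly $N$ when $x$ consists of a block of $+1$'s followed by a block of $-1$'s (\cref{lem:two_blocks}), and proves invariance of the count mod $4$ under swapping two adjacent bits of $x$ (\cref{lem:switch}); the latter is established by redrawing $\Cloth_{N,N}(x,x)$ as a link-like graph on an annulus and analyzing pairs of triple-point (Reidemeister III) moves, using the $\ZZ/2$-symmetry of the annulus picture to control which configurations can occur. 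Nothing in your sketch substitutes for this step, and you yourself flag it as the main obstacle.

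For part (4) the situation is worse: your proposed strategy rests on a false claim. You want to show that the $N$ ``below-diagonal'' arcs (joining the south and east edges at each diagonal vertex) lie on $N$ distinct loops. This already fails for $N=4$, $x=++--$. Tracing $\Cloth_{4,4}(x,x)$ (rows $0,1$ point right, rows $2,3$ left, columns $0,1$ up, columns $2,3$ down), the loop through $(0,0),(0,1),(1,1),(1,2),(0,2),(0,3),(3,3),(3,2),(2,2),(2,1),(3,1),(3,0)$ contains the below-diagonal arcs at \emph{both} $(2,2)$ and $(3,3)$ (it enters each from the east and leaves to the south), as well as the above-diagonal arcs at $(0,0)$ and $(1,1)$; its reflection across the diagonal contains the remaining two below-diagonal arcs. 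The other two of the four loops are length-$4$ squares, through $(1,0),(2,0),(2,3),(1,3)$ and $(3,1),(0,1),(0,2),(3,2)$, which miss the diagonal entirely. So only two loops meet the diagonal at all, and no argument that counts loops via diagonal arcs can produce the correct lower bound $4$ in this example. The paper's lower bound comes from a different, non-elementary source: Hitomezashi loops of the annular diagram are precisely the Seifert circles of a diagram of the torus link $T(N,N)$, and Yamada's theorem bounds the number of Seifert circles of any diagram below by the braid index, hence by the number of components, which is $N$. Your observation that $x=(+,\dots,+)$ attains the value $N$ is correct, but that is the easy half.
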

We will use different approaches for each result. Let us summarize these approaches here. \cref{prop:homology} is proven using the idea of ``heights" introduced in \cite{Pete2008}. We lift the toroidal Hitomezashi loops to infinite planar Hitomezashi paths, and use height to analyze two such paths that are adjacent.

\cref{prop:length} is proven using the tool of ``excursions" as in \cite{shortproof23}. We first establish a connection between toroidal Hitomezashi loops and two types of planar Hitomezashi excursions, then use the method of induction in \cite{shortproof23} to determine the length of these excursions.

\cref{prop:loop-count}(1) is a corollary of \cref{prop:homology}. \cref{prop:loop-count}(2) can either be derived from \cref{prop:length} by considering the sum of lengths of all loops, or derived from the fact that the number of clockwise trivial Hitomezashi loops is equal to the number of counterclockwise trivial Hitomezashi loops.

This paper emerges out of an attempt to prove \cref{prop:loop-count}(3), in a knot theoretic context. The proofs of \cref{prop:loop-count}(3, 4) are thereby based on a novel connection between symmetric Hitomezashi patterns $\Cloth_{N, N}(x, x)$ and knots. Despite the simple statements, we don't see an easy combinatorial proof of these two results.

Roughly speaking, we realize $\Cloth_{N,N}(x,x)$ as the underlying graph of a link diagram of the torus link $T(N,N)$ in the sense of knot theory. In this context, the notion of Hitomezashi loops corresponds exactly to that of \textit{Seifert circles} in knot theory. \cref{prop:loop-count}(4) now follows from a deep knot-theoretic result on Seifert circles and the braid index of a link. The proof of \cref{prop:loop-count}(3) is by applying some ``triple-point moves,'' motivated by the Reidemeister III move in the knot theory, to show the loop counts for $\Cloth_{N, N}(x, x)$ and $\Cloth_{N, N}(x', x')$ are equal module $4$, where $x'$ is $x$ with two adjacent bits flipped. This allows us to reduce to the $x$ that consists of consecutive $1$'s followed by consecutive $(-1)$'s, in which case the property can be easily verified.

\begin{remark}
One can check via examples that the moduli in the above theorems are optimal. For example, the number of loops in $\Cloth_{N, N}(x, x)$ modulo any $K$ with $K\nmid4$ cannot be written as a function of $k(x)$ and $N$.
\end{remark}

The rest of the paper is structured as follows. Section \ref{sec:homology}, \ref{sec:length}, \ref{sec:knot-theory} are devoted to the proof of \cref{prop:homology}, \cref{prop:length}, \cref{prop:loop-count}, respectively. In Section \ref{sec:problems}, we present some futher directions of research.

\section*{Acknowledgement}
Shengtong Zhang is supported by the Craig Franklin Fellowship in Mathematics at Stanford University.

\section{Homology class and Height}
\label{sec:homology}
\subsection{Height}
For $x, y \in \{-1, 1\}^{\ZZ}$, let $\Cloth_{\ZZ}(x, y)$ denote the orientation of $\Cloth_{\ZZ}$ defined verbatim as \cref{defn:hitomezashi-toroidal}.

Following \cite{Pete2008}, we define the height\footnote{Note that our definition of height differs from the ``height" in \cite{Pete2008} by a factor of $2$.} for regions, edges, and Hitomezashi paths of $\Cloth_\ZZ(x,y)$. Here a region means a unit square region on the plane divided by the drawing of the graph $\Cloth_\ZZ$. 

For every oriented edge in $\Cloth_\ZZ(x,y)$, we demand the region on its left to be one unit higher than the region on its right, and the edge to be of the average height of these two regions. This uniquely defines the height on regions and edges up to an additive constant. More concretely, one may define the height of the region containing $(i+1/2,j+1/2)$ to be $\sum_{k=0}^jx_k+\sum_{k=0}^iy_k$, where $\sum_{k=0}^r$ is interpreted as $-\sum_{k=r}^{-1}$ if $r<0$.

If two regions are connected by a path on the plane that intersects the drawing of $\Cloth_\ZZ$ only discretely and does not transversely intersect any Hitomezashi path, then they have the same height. In particular, by perturbing a Hitomezashi path to its left (resp. right) except at vertices, we see all regions on the immediate left (resp. right) of a Hitomezashi path have the same height, an observation first made in \cite[Page 5-6]{Pete2008}. Therefore all edges on a Hitomezashi path have the same height, which is defined to be the height of this Hitomezashi path.

Finally, for a toroidal Hitomezashi pattern $\Cloth_{M, N}(x,y)$ with $k(x)=k(y)=0$, we may define the height of its regions, edges, and toroidal Hitomezashi loops exactly as above.

\subsection{Proof of \texorpdfstring{\cref{prop:homology}}{Theorem 1}}

(1) is a consequence of (2) together with \cref{obs}(3). Thus we only prove (2).

The sum of homology classes of all Hitomezashi loops in $\Cloth_{M, N}(x,y)$ equals to $(k(x),k(y))$. Together with \cref{obs}(2), we see if $(k(x),k(y))\ne(0,0)$, the homology class of any nontrivial toroidal Hitomezashi loop must be $$\pm(k(x)/\gcd(x,y),k(y)/\gcd(x,y)).$$ This proves (2) for the cases $k(x)=0$, $k(y)\ne0$ and $k(x)\ne0$, $k(y)=0$. Below we deal with the other two cases.

\textbf{Case 1}: $k(x),k(y)\ne0$. By symmetry we may assume $k(x)>0$.

Assume for the sake of contradiction that there exists a toroidal Hitomezashi loop $\gamma$ with homology class $-(k(x)/\gcd(x,y),k(y)/\gcd(x,y)).$
Necessarily there also exists a loop $\gamma'$ with homology class $(k(x)/\gcd(x,y),$ $k(y)/\gcd(x,y))$. The loops $\gamma,\gamma'$ cobound two annular regions on the torus. We can choose $\gamma,\gamma'$ so that $\gamma'$ is immediately below $\gamma$ in the sense that the annular region below $\gamma$ and above $\gamma'$ contains no nontrivial toroidal Hitomezashi loops (we may talk about above/below thanks to $k(x)\ne0$).

Pullback the toroidal Hitomezashi pattern $\Cloth_{M,N}(x,y)$ to the planar graph $\Cloth_\ZZ$, i.e. define infinite strings $\tilde x,\tilde y\in\{-1,1\}^\ZZ$ by repeating $x,y$ periodically, and consider the planar Hitomezashi pattern $\Cloth_\ZZ(\tilde x,\tilde y)$, which comes with a covering map onto $\Cloth_{M,N}(x,y)$. Let $\tilde\gamma$, $\tilde\gamma'$ be lifts of $\gamma$, $\gamma'$ with $\tilde\gamma'$ immediately below $\tilde\gamma$ (i.e. there are no infinite Hitomezashi paths between them). Then there is a planar path between them that does not transversely intersect any Hitomezashi path. Therefore, the regions on the left of $\tilde\gamma$ has the same height with the regions on the left of $\tilde\gamma'$. Hence $\tilde\gamma$ and $\tilde\gamma'$ have the same height.

Since $k(x)>0$, by cyclically permuting $x$ we may assume all nonempty partial sums of $x$ are positive, i.e. $\sum_{k=0}^j\tilde x_k>0$ for all $j\ge0$. Since the second component of the homology class of $\gamma'$ is nonzero, $\tilde\gamma'$ contains an edge $e_1$ of the form $(i,0)\to(i+1,0)$. Since the first component of the homology class of $\gamma$ is negative and $\tilde\gamma$ is above $\tilde\gamma'$, $\tilde\gamma$ contains an edge $e_2$ of the form $(i+1,j)\to(i,j)$ with $j>0$. Now, the region on the right to $e_2$ and the region on the right to $e_1$ have the same height because $\tilde\gamma,\tilde\gamma'$ have the same height. On the other hand, their height difference equals to $\sum_{k=0}^j\tilde x_k>0$. This is a contradiction.

\textbf{Case 2}: $k(x)=k(y)=0$.

By cyclically permuting $x,y$, we may assume all partial sums of $x$ are nonpositive and all partial sums of $y$ are nonnegative. Then the height of $(-1/2,-1/2)$ is maximal among all $(-1/2,j-1/2)$ and minimal among all $(i-1/2,-1/2)$. Consequently, all edges of the form $\{(-1,j),(0,j)\}$ have strictly lower height than all edges of the form $\{(i,-1),(i,0)\}$. It follows that for a fixed toroidal Hitomezashi loop, it either passes no edge of the former type, or passes no edge of the latter type. This forbids the existence of a Hitomezashi loop with homology class $(u,v)$, $uv\ne0$.\qed

\section{Loop length and Excursion}
\label{sec:length}
In this section, we prove Theorem \ref{prop:length} based on the excursion arguments in \cite{shortproof23}.

\subsection{Excursions} 
Let $\gamma$ be a nontrivial Hitomezashi loop on $\Cloth_{M, N}(x, y)$, and let $\tgamma$ be its lift to $\Cloth_{\ZZ}(\Tilde{x}, \Tilde{y})$, where $\Tilde{x}, \Tilde{y} \in \{\pm 1\}^{\ZZ}$ are periodic extensions of $x, y$. Note that $\tgamma$ is an infinite planar Hitomezashi path. 

The goal of this subsection is to introduce ``Hitomezashi excursions", and chop $\tgamma$ up into these excursions.\footnote{Note that \cite{Pete2008} introduces a different definition of ``excursion".}
\begin{definition}
Let $a,b$ be integers. 

An \textbf{(Hitomezashi) $a$-excursion} is a planar Hitomezashi path with at least three vertices, start vertex $(a-1, i)$ for some $i \in \ZZ$, end vertex $(a-1, j)$ for some $j > i$, and all other vertices lying in the half plane $\cH_a = \{(x, y): x \geq a\}$.

An \textbf{(Hitomezashi) $(a, b)$-excursion} is a planar Hitomezashi path with start vertex $(p, b - 1)$ for some $p \geq a$, end vertex $(a - 1, q)$ for some $q \geq b$, and all other vertices lying in the quadrant $\cH_{a, b} = \{(x, y): x \geq a, y \geq b\}$.
\end{definition}
Aside from the restriction $j > i$, our definition of $a$-excursion is the same as in \cite{shortproof23}. Note that the first and last edge in an $a$-excursion must be horizontal. In an $(a, b)$-excursion, the first edge is vertical and the last edge is horizontal.

The next lemma explains how $\tgamma$ can be chopped up into Hitomezashi excursions.
\begin{lemma}
    \label{lem:excursion-in-lifting}
Let $L$ be the length of $\gamma$. Let $(\lambda, \mu)$ be the homology class of $\gamma$. Denote the vertices of $\tgamma$ by $\cdots, \widetilde{v_{-1}}, \widetilde{v_0}, \widetilde{v_1}, \cdots$.
\begin{enumerate}
    \item If $\lambda < 0, \mu > 0$, there exists $i < 0, j > 0$ such that for any $k, \ell \geq 0$, the subpath of $\tgamma$ formed by $\{\widetilde{v_t}: t \in [i - kL, j + \ell L]\}$ is an $(a, b)$-excursion for some $a, b \in \ZZ$.

    \item If $\lambda = 0, \mu > 0$, there exists an $a \in \ZZ$ such that $\tgamma$ can be partitioned into a disjoint union of $a$-excursions and upward-pointing edges lying on the line $x = a - 1$.
\end{enumerate}
\end{lemma}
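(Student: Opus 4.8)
The plan is to study the bi-infinite lift $\tgamma$ through its periodicity $\wv{t+L}=\wv t+(\lambda M,\mu N)$ together with extremal (``record'') positions of its coordinate functions, in the spirit of the excursion arguments of \cite{shortproof23}. For (1), the drift $\lambda<0,\mu>0$ means $x(\wv t)\to-\infty$ and $y(\wv t)\to+\infty$ as $t\to+\infty$, with the reverse as $t\to-\infty$. I would take $i<0$ to be an index at which $\wv i$ is a strict minimum of the $y$-coordinate among all \emph{later} vertices, and $j>0$ an index at which $\wv j$ is a strict minimum of the $x$-coordinate among all \emph{earlier} vertices; the drift guarantees infinitely many such indices in each range. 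Strictness forces $\wv i\to\wv{i+1}$ to be an upward vertical edge and $\wv{j-1}\to\wv j$ a leftward horizontal edge, matching the first/last-edge requirements of an $(a,b)$-excursion.

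The key observation is that these record properties are preserved by the period shift (shifting the index by a multiple of $L$ only translates $x$ and $y$ by constants), so $\wv{i-kL}$ remains a strict $y$-minimum over all later vertices and $\wv{j+\ell L}$ a strict $x$-minimum over all earlier ones. Put $b-1=y(\wv{i-kL})$ and $a-1=x(\wv{j+\ell L})$. Then every vertex of the window $[i-kL,\,j+\ell L]$ except its two endpoints has $y>b-1$ and $x>a-1$, i.e.\ lies in $\cH_{a,b}$; moreover the endpoints themselves satisfy the corner conditions $p\ge a$ and $q\ge b$ for free, since the start is the unique $y$-minimum and the end the unique $x$-minimum of the window. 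Hence each window is an $(a,b)$-excursion. This part is essentially bookkeeping and I expect no real obstacle.

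For (2), $\lambda=0$ makes the $x$-coordinates of $\tgamma$ periodic, hence bounded; let $a-1$ be their minimum. At the leftmost column, \cref{obs}(1), the alternation of horizontal and vertical edges, and minimality together show that each visit to $\{x=a-1\}$ consists of exactly two vertices joined by one vertical edge and flanked by two rightward horizontal edges; all these vertical edges carry the single orientation $\tilde y_{a-1}$ of the column. Declaring the maximal subpaths between consecutive visits (which lie in $\cH_a$) to be the excursions, and the vertical edges to be the edges on $\{x=a-1\}$, already partitions $\tgamma$.

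The hard part is showing that these vertical edges point up and that the excursions genuinely ascend ($j>i$), so that they really are $a$-excursions. My approach is topological: as $\lambda=0$, the path descends to an embedded oriented simple closed curve $\bar\gamma$ on the cylinder $\RR\times(\RR/\mu N\ZZ)$ that winds once positively in $y$ (because $\mu>0$), lies in $\{x\ge a-1\}$, and meets $\{x=a-1\}$ exactly along the vertical edges above. Each excursion, together with the sub-segment of $\{x=a-1\}$ joining its endpoints, bounds a disk on the $x\ge a-1$ side; collapsing $\bar\gamma$ across these disks is an isotopy onto the core circle $\{x=a-1\}$, so $\bar\gamma$ runs along that circle monotonically in its winding direction, namely upward. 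This at once forces $\tilde y_{a-1}=+1$ and makes each excursion end strictly above where it began. (Alternatively, the region $\{x<a-1\}$ sits to the left of the $+y$-oriented curve $\bar\gamma$ and must stay on its left along each touching edge, again giving the upward orientation.) The step I expect to require the most care is making this monotonicity rigorous, i.e.\ justifying that the excursion-disks, possibly nested, can be collapsed by a single ambient isotopy.
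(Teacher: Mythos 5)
Your part (1) is correct and coincides with the paper's own argument: both take a strict record minimum of the $y$-coordinate over all later vertices and of the $x$-coordinate over all earlier vertices, observe that these record properties persist under the period shift $\wv{t+L}=\wv{t}+(\lambda M,\mu N)$, and read off the $(a,b)$-excursion conditions with $b-1,a-1$ equal to the two record values.

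Part (2) is where you diverge, and where there is a genuine gap, located exactly at the step you flagged. Your plan is to collapse the disks cut off by the excursion arcs, ``possibly nested,'' by a single ambient isotopy onto the circle $\{x=a-1\}$ and deduce monotonicity. But nested disks cannot be collapsed simultaneously while keeping the curve embedded: if $D_2\subset D_1$, the two excursion arcs get pushed onto overlapping segments of the circle, the resulting parametrization is not injective, and the degree-one (hence monotone) conclusion you want does not follow. So this is not merely a step ``requiring care'' --- the isotopy you invoke does not exist in the nested case. The missing idea is that nesting is impossible, and this must be derived from $\mu>0$: if some excursion arc lay inside the disk $D_1$ cut off by another arc $\beta_1$, then the rest of the curve could neither cross $\beta_1$ nor escape $\overline{D_1}$ through the boundary circle (every point of $\partial D_1\cap\{x=a-1\}$ other than the endpoints of $\beta_1$ has a half-disk neighborhood contained in $\overline{D_1}$), so the entire closed curve $\bar\gamma$ would be trapped in the disk $\overline{D_1}$ and hence be nullhomotopic in the cylinder, contradicting the fact that it winds once. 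Once nesting is excluded, the disks are parallel, your collapse exists, and both conclusions (upward column edges and the ascending condition $j>i$ required by the definition of an $a$-excursion) follow. Two smaller points: your parenthetical ``left-side'' alternative, even if completed, only yields the upward orientation of the edges on $x=a-1$; it does not by itself give $j>i$, which part (2) also needs. And the assertion that $\bar\gamma$ is an embedded simple closed curve on the cylinder deserves justification (it follows from $\tgamma$ being an embedded path together with its translation invariance). For comparison, the paper sidesteps the cylinder topology entirely: it applies the non-crossing lemma for disjoint Hitomezashi paths in a half-plane (Lemma 2.3 of \cite{shortproof23}) to two consecutive length-$L$ windows of $\tgamma$, noting that a downward edge on $x=a-1$, or a non-ascending pair of consecutive visits, would force the four window endpoints to interleave on that line, which the lemma forbids. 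Your route can be completed as indicated above, but as written its central step is missing rather than merely delicate.
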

\begin{proof}
Let the coordinates of $\widetilde{v_i}$ be $(a_i, b_i)$. 

(1) Assume $\lambda < 0, \mu > 0$. As $b_{m + L} = b_m + \mu N$ for any $m \in \ZZ$, the quantity $\overline{b}_m = \inf_{n \geq m} b_n$ is finite and satisfies $\lim_{a \to -\infty} \overline{b}_a = -\infty$. Thus, there exists some $i < 0$ such that $\overline{b}_i < \overline{b}_{i + 1}$. For this $i$, we have $b_i < \inf_{n \geq i + 1} b_n$. Similarly, there exists some $j > 0$ such that $a_j < \inf_{n \leq j - 1} a_n$.

Take $a = a_j + 1$ and $b = b_i + 1$. By definition, for any $i + 1 \leq n \leq j - 1$, we have $a_n > a_j = a - 1$ and $b_n > b_i = b - 1$, so $\widetilde{v_i}$ lies in $\cH_{a, b}$. Furthermore, $b_i > b_j = b - 1$ and $a_j > a_i = a - 1$. So $\{v_t: t \in [i, j]\}$ form an $(a, b)$-excursion.

For any $k, \ell \geq 0$, we have $b_{m - k L} = b_m - k \mu N$ for any $m \in \ZZ$, so $b_{i - kL} < \inf_{n \geq i - kL + 1} b_n$. Similarly, we have $a_{j + \ell L} < \inf_{n \leq j + \ell L - 1} a_n$. So by the same reasoning as the previous paragraph  $\{v_t: t \in [i - kL, j + \ell L]\}$ is an $(a, b)$-excursion for $a = a_{j + \ell L } + 1, b = b_{i - kL} + 1$.

(2) Assume $\lambda = 0, \mu > 0$. As $a_{m + L} = a_m + \mu N = a_m$, the quantity $a = 1 + \inf_{m \in \ZZ} a_m$ is well-defined. Removing all edges on $x = a - 1$ partitions $\tgamma$ into finite Hitomezashi paths.

We first show that the edges of $\tgamma$ lying on the line $x = a - 1$ point upwards. Let $(\wv{i}, \wv{i + 1})$ be such an edge. The subpaths $\{\wv{n}: i - L + 1\leq n \leq i\}$ and $\{\wv{n}: i + 1\leq n \leq i + L\}$ are disjoint paths in $\cH_{a - 1}$, with endpoints $\{(a - 1, b_{i + 1} - \mu N), (a - 1, b_{i})\}$ and $\{(a - 1, b_{i + 1}), (a - 1, b_{i} + \mu N)\}$ respectively. If $b_i > b_{i + 1}$, then
$$b_{i + 1} - \mu N < b_{i + 1} < b_i < b_{i} + \mu N$$
contradicting Lemma 2.3 of \cite{shortproof23}. Thus the edge $(\wv{i}, \wv{i + 1})$ must point upward.

Let $(\wv{i}, \wv{i + 1})$ and $(\wv{j}, \wv{j + 1})$ be consecutive edges of $\tgamma$ lying on $x = a - 1$. The subpaths $\{\wv{n}: i + 1 \leq n \leq j\}$ and $\{\wv{n}: j + 1\leq n \leq i + L + 1\}$ are disjoint paths in $\cH_{a - 1}$, with endpoints $\{(a - 1, b_{i + 1}), (a - 1, b_{j})\}$ and $\{(a - 1, b_{j + 1}), (a - 1, b_{i + 1} + \mu N)\}$. If $b_{j + 1} < b_{i + 1}$, then we have
$$b_{j} < b_{j + 1} < b_{i + 1} < b_{i + 1} + \mu N$$
contradicting Lemma 2.3 of \cite{shortproof23}. Thus $b_{j + 1} > b_{i + 1}$, which implies $b_j > b_{i + 1}$ since $b_j \neq b_{i + 1}$.

Therefore, the segment of $\tgamma$ between consecutive edges on $x = a - 1$ form an $a$-excursion.
\end{proof}

\subsection{Length of excursions}
To prove our claim on the length of toroidal Hitomezashi loops, we prove congruences for the lengths of excursions. Two claims we need have already been established.
\begin{lemma}[\cite{defant2022loops}]
    \label{thm:loop-length}
    The length of a planar Hitomezashi loop is congruent to $4$ modulo $8$.
\end{lemma}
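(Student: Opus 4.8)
The plan is to prove this via the same excursion machinery just developed, since a planar Hitomezashi loop is exactly a finite rectilinear simple closed curve of trivial homology. First I would set up a decomposition mirroring \cref{lem:excursion-in-lifting}(2): let $x=a-1$ be the leftmost vertical line that the loop meets, so that every vertex has $x$-coordinate $\ge a-1$. Cutting the loop along this line decomposes it into $a$-excursions (each a maximal sub-path that leaves $x=a-1$, stays in $\cH_a$, and returns) joined by vertical segments lying on $x=a-1$. The total length is then the sum of the excursion lengths plus the number of these connecting vertical edges, so it suffices to control excursion lengths and count the pieces modulo $8$.

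The crux is a congruence for the length of a single $a$-excursion, which I would establish by induction on the number of vertices, following the method of \cite{shortproof23}. Given an $a$-excursion from $(a-1,i)$ to $(a-1,j)$, deleting its first and last (horizontal) edges leaves a path in $\cH_a$ from $(a,i)$ to $(a,j)$ which itself decomposes, along the line $x=a$, into $(a+1)$-excursions and vertical segments on $x=a$. The key structural input is Lemma 2.3 of \cite{shortproof23}: two disjoint Hitomezashi paths in a half-plane with endpoints on the bounding line cannot interleave. This monotone nesting is exactly what allows the recursion to close, expressing the length of the excursion in terms of the lengths of the strictly smaller sub-excursions together with the parities of the connecting vertical runs. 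I expect this to be the main obstacle: one must isolate the correct mod-$8$ invariant for excursions and bookkeep the parities of the vertical segments carefully enough that the recursion is exact, not merely valid modulo $2$ or $4$.

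The base cases are the minimal excursion — the unit tab $(a-1,i)\to(a,i)\to(a,i+1)\to(a-1,i+1)$ of length $3$ — and, for the whole loop, the unit square of length $4\equiv4\pmod 8$. To assemble the excursion congruences into the claim $L\equiv 4\pmod 8$ for the entire loop, I would combine them with the global constraint that the loop is a simple closed curve: its total turning equals $\pm 4$ quarter-turns, i.e.\ the number of convex minus reflex corners is $\pm4$, which together with the parity data pins down the number of excursions and connecting edges modulo $4$. As an independent consistency check, the target statement is compatible with the known enclosed-area congruence $\equiv 1 \pmod 4$ of \cite{defant2022loops} via Pick's theorem. Once the inductive mod-$8$ invariant for excursions is in place, folding it together with the turning-number count is routine bookkeeping.
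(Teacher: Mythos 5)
First, a point of comparison: the paper never proves this lemma --- it is imported verbatim from \cite{defant2022loops}, and the excursion machinery of Section 3 (which your sketch essentially reinvents, following \cite{shortproof23}) is deployed there only for \emph{nontrivial} toroidal loops. So your attempt must stand on its own, and while its skeleton (cut along the leftmost line, apply an excursion-length congruence, reassemble) is the right one, it has a genuine gap exactly at the step you dismiss as ``routine bookkeeping.''

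The issue is the \emph{direction} of the excursions. Orient the loop so that its vertical edges on the leftmost line $x=a-1$ all point up; if there are $m$ of them, the $m$ excursions have total vertical displacement $-m$, so they cannot all go upward, and the congruence of \cref{thm:half-excursion-length} (which requires $j>i$) does not apply to the downward ones. By up-down mirror symmetry the correct congruence for a downward excursion is $2\abs{j-i}+1$, and the absolute value is not cosmetic: summing the signed expression $2(j-i)+1$ over all pieces would yield $L\equiv 0\pmod 8$, which is false. Doing the sum correctly --- using that all vertical edges of a planar Hitomezashi pattern on a fixed column sit at heights of a single parity, so every excursion displacement is odd --- gives
\[
L \equiv 4\cdot\#\{\text{downward excursions}\} \pmod 8,
\]
so the entire content of the lemma becomes the claim that \emph{the number of downward excursions is odd}. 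Your turning-number argument cannot supply this: for a counterclockwise loop with $c$ clockwise and $w$ counterclockwise turns, $w-c=4$ gives $L=c+w=2c+4$, so ``$L\equiv4\pmod 8$'' is \emph{equivalent} to ``$c\equiv0\pmod 4$''; it is a restatement of the goal, not an input. What is actually needed is an analysis of how the excursion arcs --- disjoint, hence non-crossing, arcs in $\cH_a$ with endpoints on the line --- join the $m$ vertical edges into a \emph{single} cycle. (For instance, one can induct on $m$: an innermost arc joins two adjacent endpoints, and for $m\ge2$ it must join the top of one vertical edge to the bottom of the next one up, hence is upward; contracting it preserves the single-cycle, non-crossing structure and the number of downward arcs, and the base case $m=1$ is a single downward excursion.) It is worth noting that this is precisely the difficulty that \cref{lem:excursion-in-lifting}(2) rules out for nontrivial loops: there, Lemma 2.3 of \cite{shortproof23} forces the lifted path's excursions to stack monotonically upward, and that monotonicity is exactly what fails for a finite, nullhomotopic loop.
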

\begin{lemma}[\cite{shortproof23}]
    \label{thm:half-excursion-length}
    The length of an $a$-excursion from $(a - 1, i)$ to $(a - 1, j)$ is congruent to $2(j - i) + 1$ modulo $8$.    
\end{lemma}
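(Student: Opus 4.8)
The plan is to prove this by strong induction on the length $L$ of the excursion, slicing an $a$-excursion along the leftmost column $x=a$ that its interior is allowed to occupy, thereby expressing it in terms of strictly shorter $(a+1)$-excursions. For the base case, the shortest possible $a$-excursion is $(a-1,i)\to(a,i)\to(a,i+1)\to(a-1,i+1)$, which has length $3$ and matches $2(j-i)+1=3$ since here $j=i+1$. (I would not try to close the excursion into a planar loop and invoke \cref{thm:loop-length}: the horizontal/vertical alternation makes the two endpoint edges horizontal, so no vertical closing arc on the line $x=a-1$ respects the alternation, and this obstruction is precisely what motivates the direct excursion induction.)

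For the inductive step I would first record the local structure along $x=a$. Because every interior vertex lies in $\cH_a$ and consecutive edges of a Hitomezashi path alternate between horizontal and vertical, at each interior column-$a$ vertex exactly one incident edge is vertical and the other is horizontal pointing rightward into $\cH_{a+1}$ (a leftward horizontal edge is impossible except at the two endpoints). Hence the column-$a$ vertices occur in vertically adjacent pairs joined by a single vertical edge, and each maximal detour of the path into $\cH_{a+1}$ between two such pairs is itself an $(a+1)$-excursion. Writing $E_1,\dots,E_k$ for these detours, with $E_m$ having height-difference $d_m\ge 1$ and length $\ell_m$, a direct count gives $L = 2 + (k+1) + \sum_{m=1}^k \ell_m$, where the $2$ counts the entry and exit horizontal edges on $x=a-1$ and the $k+1$ counts the vertical edges on $x=a$. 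By the induction hypothesis $\ell_m \equiv 2d_m+1 \pmod 8$, so $L \equiv 2k+3 + 2\sum_{m=1}^k d_m \pmod 8$.

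To finish, I must identify $\sum_{m=1}^k d_m$ with $(j-i)-(k+1)$, which amounts to showing that the heights of the column-$a$ vertices, read in path order, increase strictly from $i$ to $j$. Granting this, every vertical step on $x=a$ contributes $+1$ and every detour contributes its positive jump $d_m$, so the telescoping $\sum_{m=1}^k d_m + (k+1) = j-i$ holds and $L \equiv 2(j-i)+1 \pmod 8$ as desired.

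This monotonicity is the main obstacle, and I expect to establish it exactly as in the proof of \cref{lem:excursion-in-lifting}(2). The detours $E_1,\dots,E_k$ are pairwise disjoint Hitomezashi paths all of whose feet lie on the line $x=a$, so Lemma 2.3 of \cite{shortproof23} forbids their feet-intervals from interleaving; feeding this non-crossing information into the order in which the path visits them forces each vertical edge on $x=a$ to point upward and each successive detour to launch above the previous detour's return. Once monotonicity is in hand, each $E_m$ is automatically an \emph{upward} $(a+1)$-excursion, so the induction never leaves the class to which the statement applies and the mod-$8$ computation above is exact. Verifying the sign control supplied by Lemma 2.3 is the only delicate point; the remaining steps are purely bookkeeping.
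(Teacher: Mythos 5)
Your decomposition of the excursion and the resulting length bookkeeping are fine, but the monotonicity claim on which the whole argument hinges --- that the vertical edges on the column $x=a$ are all traversed upward and that the column-$a$ vertices appear in strictly increasing height order from $i$ to $j$ --- is false, so no appeal to Lemma 2.3 of \cite{shortproof23} can establish it. Concretely, take $a=0$ and the directed path
\[
(-1,0)\to(0,0)\to(0,-1)\to(1,-1)\to(1,0)\to(2,0)\to(2,1)\to(1,1)\to(1,2)\to(0,2)\to(0,1)\to(-1,1).
\]
All edges on a common row or column have consistent direction and position parity (rows $0,-1$ point right, rows $1,2$ point left, column $0$ points down, columns $1,2$ point up), so this is a genuine Hitomezashi path, and it is a $0$-excursion with $i=0$, $j=1$. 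Both of its vertical edges on column $0$ point \emph{downward}, the column-$0$ vertices occur in height order $0,-1,2,1$, and its single detour is an upward $1$-excursion of displacement $d_1=3$; your telescoping identity $\sum_m d_m+(k+1)=j-i$ reads $5=1$ here. (The final congruence $11\equiv 3\pmod 8$ survives only because the discrepancy $2(5-1)$ happens to be divisible by $8$.) Neither of your proposed tools can close this gap: with a single detour the non-interleaving statement is vacuous, and the argument of \cref{lem:excursion-in-lifting}(2) forces upward edges only because the lifted path there is bi-infinite and periodic with positive vertical drift $\mu>0$; a finite excursion has no drift, and nothing ties the common direction of the column-$a$ edges (which is just the orientation of that column) to the sign of $j-i$.

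A corrected version of your induction must carry a sign $\sigma=\pm1$ for the common direction of the column-$a$ edges and signs $\epsilon_m=\pm1$ for the detours (downward detours are handled by reflection symmetry), and then, on top of your computation, it needs the congruence $(1-\sigma)(k+1)+\sum_m(1-\epsilon_m)d_m\equiv 0\pmod 4$; that congruence is the real content of the lemma, and your outline does not address it. This is also exactly why the paper does not argue this way: \cref{thm:half-excursion-length} is imported from \cite{shortproof23} without proof, and the induction the paper does carry out, \cref{lem:quarter-excursion-length}, slices a quadrant-type $(a,b)$-excursion along the row $y=b$. There the path enters the slicing row from below, can never return below it, and every visit to that row uses one of the enumerated horizontal edges, all pointing in the direction $x_b$; this anchors the feet to be genuinely monotone. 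A half-plane excursion sliced along $x=a$ enjoys no such anchoring, and can slide down the column and step off it, which is precisely what the counterexample does.
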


Here we prove an additional result on the length of $(a,b)$-excursions.
\begin{lemma}
    \label{lem:quarter-excursion-length}
    On $\Cloth_{\ZZ}(x, y)$, let $\cC$ be an $(a, b)$-excursion from $(p, b - 1)$ to $(a - 1, q)$. Let $k(\cC)$ denote 
    $$k(\cC) = -\sum_{k = b}^q x_k.$$
    Then the length of $\cC$ is congruent to $2(q - b - p + a + k(\cC))$ modulo $8$.
\end{lemma}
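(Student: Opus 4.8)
The plan is to prove the congruence by induction on $q-b$, peeling the bottom row $y=b$ off the quadrant $\cH_{a,b}$ in the spirit of the inductive excursion arguments of \cite{shortproof23}. First I record the rigidity forced by the definition: every vertex of $\cC$ except its start lies in $y\ge b$, and every vertex except its end lies in $x\ge a$. Hence $\cC$ meets the line $y=b-1$ only at $(p,b-1)$ and the line $x=a-1$ only at $(a-1,q)$; its first edge is the up-edge $(p,b-1)\to(p,b)$ and its last edge is the left-edge $(a,q)\to(a-1,q)$, so $x_q=-1$. (The interior of $\cC$ may rise above row $q$; this is harmless, as $k(\cC)$ records only rows $b,\dots,q$.)

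Let $(c,b)$ be the last vertex of $\cC$ on row $y=b$. When $q>b$, splitting $\cC$ there writes $\cC=B\cdot F$, where every vertex of $F$ but its start lies in $y\ge b+1$, so $F$ is a genuine $(a,b+1)$-excursion from $(c,b)$ to $(a-1,q)$ with $k(F)=-\sum_{k=b+1}^q x_k=k(\cC)+x_b$. Applying the induction hypothesis to $F$, a short computation shows the claim reduces to
\[
(\text{length of }B)\equiv 2(c-p)+2-2x_b\pmod 8,
\]
as this is exactly the increment carrying the formula for $F$ to that for $\cC$. The base case $q=b$, where $\cC=B$ runs straight to the left exit, fits the same computation with $c=a-1$.

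To compute the length of $B$, observe that after its initial up-edge it is a strictly alternating sequence of horizontal edges inside row $b$ and \emph{bumps}---maximal subpaths ascending into $y\ge b+1$ and returning to row $b$---beginning and ending with a horizontal edge, so there are $r$ bumps and $r+1$ horizontal edges. A bump from $(i,b)$ to $(j,b)$ has length $\equiv 2\abs{j-i}+1\pmod 8$: this is \cref{thm:half-excursion-length} in vertical form when $j>i$, and in general follows by reflecting across a vertical line, which sends the bump to a genuine excursion of the reflected pattern (to which \cref{thm:half-excursion-length} still applies) of the same length. Letting $p=d_0,d_1,\dots,d_m=c$ be the $x$-coordinates of the successive row-$b$ vertices and $V:=\sum_t\abs{d_{t+1}-d_t}$ their total variation, the initial up-edge contributes $1$ while the residual $\pm1$ terms (one $-1$ per horizontal edge, one $+1$ per bump) sum to $-1$; these cancel, leaving $(\text{length of }B)\equiv 2V\pmod 8$.

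The hard part is matching $2V$ with the target $2(c-p)+2-2x_b$. Writing $\sigma:=\sum_{\text{leftward steps}}\abs{d_{t+1}-d_t}$ for the leftward part of the variation, we have $V=(c-p)+2\sigma$, so $2V=2(c-p)+4\sigma$ and the entire lemma comes down to the parity statement
\[
2\sigma\equiv 1-x_b\pmod 4,
\]
that is, $\sigma$ is even exactly when $x_b=1$. Since all horizontal edges point in the common direction $x_b$, leftward steps among them occur only when $x_b=-1$, and the rest of $\sigma$ comes from bumps that close to the left. Establishing this parity---that the number and widths of leftward-closing bumps are forced into the correct residue by the sign $x_b$---is the delicate point, and is exactly where the embedded, non-self-crossing nature of $\cC$ (\cref{obs}(1)) must be used, presumably through the nesting structure of the bumps. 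Granting it, the remaining mod-$8$ arithmetic is routine, as already seen in the length-two excursion $(a,b-1)\to(a,b)\to(a-1,b)$.
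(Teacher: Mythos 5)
Your setup and reductions are sound, and they in fact run parallel to the paper's own proof: the paper likewise splits $\cC$ along the bottom row $y=b$, evaluates the arcs above that row as rotated excursions via \cref{thm:half-excursion-length}, and applies induction to the tail, which is an $(a,b+1)$-excursion. But your proof has a genuine gap, at exactly the point you flag yourself: the parity claim $2\sigma\equiv 1-x_b\pmod 4$ is not a loose end that can be "granted" --- it is the entire combinatorial content of the lemma, and you never prove it. Everything preceding it (the alternation of horizontal edges and bumps, the evaluation of each bump as $2\abs{j-i}+1$ by rotation/reflection, the cancellation giving $\abs{B}\equiv 2V$, and the arithmetic reducing the lemma to the parity of $\sigma$) is correct but is essentially bookkeeping; an argument that ends with "presumably through the nesting structure of the bumps" has not established the statement.

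What is needed at that point is precisely the structural fact that the paper imports from the argument of \cite{shortproof23}: if $(p_1,b),\dots,(p_t,b)$ are the starting vertices of the horizontal edges of $\cC$ on the line $y=b$, then all $p_i$ have the same parity and they are strictly monotone in the direction of $x_b$ (decreasing if $x_b=-1$, increasing if $x_b=+1$). This yields your claim immediately: when $x_b=+1$ every step of $d_0,\dots,d_m$ moves rightward, so $\sigma=0$; when $x_b=-1$ every step moves leftward, so $\sigma=p-c=p_1-(p_t-1)$, which is odd since $p_1\equiv p_t\pmod 2$. Conversely, your parity statement essentially encodes this monotonicity-plus-parity fact, so there is no way to complete your argument that avoids proving it --- and the non-crossing "nesting" of bumps inside $B$ alone does not suffice; one needs the interleaving argument for disjoint paths in a half-plane (Lemma 2.3 of \cite{shortproof23}), applied to the whole excursion including its tail, to rule out, for instance, a leftward-closing bump when $x_b=+1$. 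With that fact proved or properly quoted, your proof closes and becomes essentially the paper's.
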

\begin{proof}

We induct on the length of $\cC$. When the length of $\cC$ is $2$, the result is trivial.

For the induction step, we proceed analogous to \cite{shortproof23}. Let $(p_1, b), (p_2, b), \cdots, (p_t, b)$ be the starting vertex of every edge of $\cC$ that lies on the horizontal line $y = b$, in the order they appear in $\cC$. Clearly, $p_1 = p$. Using a similar argument as in \cite{shortproof23}, all $p_i$ have the same parity. Furthermore, one of two cases must hold.

\textbf{Case 1}: $x_b = -1$, and $p_1 > p_2 > \cdots > p_t$. Then we can partition $\cC$ into the following subpaths: the edge $(p, b - 1) \to (p, b)$, the subpaths $\cC_i$ from $(p_i - 1, b)$ to $(p_{i + 1}, b)$ for each $1 \leq i \leq t - 1$, the edges $(p_i, b) \to (p_i - 1, b)$ for each $1 \leq i \leq t$, and the subpath $\cC'$ from $(p_t - 1, b)$ to $(a - 1, q)$. Each $\cC_i$ is a rotated $b$-excursion, and $\cC'$ is an $(a, b + 1)$-excursion, so we can apply \cref{thm:half-excursion-length} and the induction hypothesis to conclude that
\begin{align*}
    \abs{\cC} &= 2 + \sum_{i = 1}^{t - 1} (\abs{\cC_i} + 1) + \abs{\cC'} \\
    &\equiv 2 + \sum_{i = 1}^{t - 1} 2(p_i - p_{i + 1}) + 2(q - (b + 1) - (p_t - 1) + a + k(\cC')) \\
    &\equiv 2 + 2(p - p_t) + 2(q - (b + 1) - (p_t - 1) + a + (k(\cC) - 1)) \\
    &\equiv 2(q - b - p + a + k(\cC)) \pmod{8}.
\end{align*}
\textbf{Case 2}: $x_b = 1$, and $p_1 < p_2 < \cdots < p_t$. Then we can partition $\cC$ into the following subpaths: the edge $(p, b - 1) \to (p, b)$, the segments $\cC_i$ from $(p_i + 1, b)$ to $(p_{i + 1}, b)$ for each $1 \leq i \leq t - 1$, the edges $(p_i, b) \to (p_i + 1, b)$ for each $1 \leq i \leq t$, and the segments $\cC'$ from $(p_t + 1, b)$ to $(a - 1, q)$. Each $\cC_i$ is a rotated $b$-excursion, and $\cC'$ is an $(a, b + 1)$-excursion, so we can apply \cref{thm:half-excursion-length} and the induction hypothesis to conclude that
\begin{align*}
    \abs{\cC} &= 2 + \sum_{i = 1}^{t - 1} (\abs{\cC_i} + 1) + \abs{\cC'} \\
    &\equiv 2 + \sum_{i = 1}^{t - 1} 2(p_{i + 1} - p_i) + 2(q - (b + 1) - (p_t + 1) + a + k(\cC')) \\
    &\equiv 2 + 2(p_t - p) + 2(q - (b + 1) - (p_t + 1) + a + (k(\cC) + 1)) \\
    &\equiv 2(q - b - p + a + k(\cC)) \pmod{8}.
\end{align*}
In either cases the induction hypothesis holds. So the induction is complete.
\end{proof}
We can finally prove \cref{prop:length}.

\begin{proof}[Proof of \cref{prop:length}]
(1) If $\gamma$ is a trivial toroidal Hitomezashi loop, then its lifting $\tgamma$ is a planar Hitomezashi loop of the same length, which is congruent to $4$ modulo $8$ by \cref{thm:loop-length}.

(2) This follows from (3) and \cref{prop:homology}(1).

(3) First note that if $\gamma$ is a nontrivial toroidal Hitomezashi loop in $\Cloth_{M, N}(x, y)$, then the reflection $\gamma^{\circ}$ about the $x$-axis is a nontrivial toroidal Hitomezashi loop in $\Cloth_{M, N}(x^{\circ}, -y)$, where $x^{\circ}_k = x_{-k}.$ If $\gamma$ has homology 
class $(\lambda, \mu)$, then $\gamma^{\circ}$ has homology class $(\lambda, -\mu)$. As $k(x)$ and $N$ have the same parity, we have
$$2(\mu N + \lambda M) - 2 \mu k(x) \equiv 
2(-\mu N + \lambda M) + 2 \mu k(x) \pmod{8}.$$
Hence, if the result holds for $\gamma$, it holds for its reflection about the $x$-axis. By the symmetry observed earlier, the same is true for reflection about the $y$-axis and the diagonal $y = x$. Hence, it suffices to prove the result in the cases $\lambda < 0, \mu > 0$ and $\lambda = 0, \mu > 0.$

First assume $\lambda < 0,  \mu > 0$. By \cref{lem:excursion-in-lifting}, there exists $i < 0, j > 0$ such that for any $k, \ell \geq 0$, the subpath of $\tgamma$ formed by $\{\wv{t}: t \in [i - kL, j + \ell L]\}$ is an $(a, b)$-excursion for some $a, b \in \ZZ$. Let $\wv{i} = (p, b - 1)$ and $\wv{j} = (a - 1, q)$. By \cref{lem:quarter-excursion-length} applied to the subpath of $\tgamma$ from $\wv{i}$ to $\wv{j}$, we have
$$j - i \equiv 2(q - b - p + a) - 2 \sum_{k = b}^q \Tilde{x}_k \pmod{8}.$$
We also have $\wv{j + L} = (a + \lambda M - 1, q + \mu N )$. By \cref{lem:quarter-excursion-length} applied to the subpath of $\tgamma$ from $\wv{i}$ to $\wv{j + L}$, we have
$$j - i + L \equiv 2(q - b - p + a + \lambda M + \mu N) - 2 \sum_{k = b}^{q + \mu N} \Tilde{x}_k \pmod{8}.$$
Subtracting the two equations, we conclude that
$$L \equiv 2(\lambda M + \mu N) - 2 \mu k(x) \pmod{8} $$
where we used the fact that $k(x) < 0$ from \cref{prop:homology}.

In the case $\lambda = 0, \mu > 0$, note that $k(x) = 0$ by \cref{prop:homology}. Let $\tgamma$ be the lifting of $\gamma$. By \cref{lem:excursion-in-lifting},  there exists an $a \in \ZZ$ such that $\tgamma$ can be partitioned into a disjoint union of $a$-excursions and upward-pointing edges lying on the line $x = a - 1$. Let $(a - 1, b_0), (a - 1, b_1), \cdots$ be the starting points of the upward-pointing edges lying on the line $x = a - 1$. There must exist some $j$ such that $b_j = b_0 + \mu N.$ For any $i$, the subpath of $\tgamma_i$ from $(a - 1, b_i + 1)$ to $(a - 1, b_{i + 1})$ is an $a$-excursion, so \cref{thm:half-excursion-length} gives
$$\abs{\tgamma_i} \equiv 2b_{i + 1} - 2b_i - 1 \bmod{8}.$$
Summing for $i = 0, 1, \cdots, j - 1$, we get
$$L = \sum_{i = 0}^{j - 1} (\abs{\tgamma_i} + 1) \equiv 2(b_j - b_0) \equiv 2 \mu N \bmod{8}.$$
This completes the proof of \cref{prop:length}.
\end{proof}

\section{Loop count; Symmetric Hitomezashi patterns and knot theory}
\label{sec:knot-theory}
In this section we give a proof to \cref{prop:loop-count} motivated by knot theory.

Since the total homology class of all toroidal Hitomezashi loops on $\Cloth_{M,N}(x,y)$ is equal to $(k(x),k(y))$, (1) is immediate from \cref{prop:homology}(2). 

We prove (2) as follows. At each vertex, each loop turns $90^{\circ}$ clockwise or counterclockwise. By elementary geometry, the quantity 
$$(\#\text{clockwise turn} - \#\text{counterclockwise turn})$$ equals $4$ for clockwise trivial Hitomezashi loops, $-4$ for counterclockwise trivial Hitomezashi loops, and $0$ for nontrivial Hitomezashi loops. On the other hand, the total number of clockwise turns across all loops is equal to the total number of counterclockwise turns, as each vertex in $\Cloth_{M, N}(x, y)$ contributes one clockwise and one counterclockwise turn. So the number of clockwise trivial Hitomezashi loops is equal to the number of counterclockwise trivial Hitomezashi loops in $\Cloth_{M, N}(x, y)$, which implies the desired statement.

For (3), it suffices to perform the proof in the following two steps.
\begin{lemma}\label{lem:two_blocks}
\cref{prop:loop-count}(3) holds if $x$ only has at most two blocks, i.e. (up to cyclic permutation) $x$ is some number of $1$'s followed by some number of $-1$'s. In fact, the loop count is exactly $N$ in this case.
\end{lemma}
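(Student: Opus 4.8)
The plan is to prove the exact count $N$ by attaching to every loop a \emph{height} and showing that this sets up a bijection between loops and an explicit set of $N$ half-integers. Write $k=k(x)=p-q$ and assume $p\ge q$ (so $k\ge 0$); the case $p<q$ is symmetric. Recall from \cref{sec:homology} the height function of $\Cloth_\ZZ(\tilde x,\tilde x)$, which is constant along every Hitomezashi path. Specializing to $y=x$, a short computation shows that the height of a cell is, up to an additive constant, $T(j)-T(i)$ for the one–variable sawtooth $T(m)=\sum_{r=1}^{m}\tilde x_r$, where $(i+\tfrac12,j+\tfrac12)$ is the cell center; crossing any edge changes the height by $\pm1$. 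Since $\tilde x$ has only two blocks, $T$ is \emph{unimodal} on each period, rising by $1$'s across the block of $+1$'s and falling across the block of $-1$'s, with drift $T(m+N)-T(m)=k$.

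First I would check that a height is well defined on each loop. A trivial loop is contractible, so it lifts to a genuine planar loop and inherits the constant height of that lift; because $T(j)-T(i)$ is unchanged under the diagonal translation $(i,j)\mapsto(i+N,j+N)$ (the two drifts cancel), the same formula assigns a constant height to every homology–$(1,1)$ loop as well, and by \cref{obs}(3) and \cref{prop:homology}(1) these are the only classes that occur. The key step is then injectivity: \emph{the edges at a fixed height form a single loop}. Indeed, the height–$h$ edges are exactly the boundary between the superlevel set $\{T(j)-T(i)>h\}$ and its complement; for each fixed $i$ the slice $\{j:T(j)>h+T(i)\}$ is a single arc because $T$ is unimodal, and these arcs move by one step as $i$ advances, so the superlevel set is connected with connected complement and its boundary is one closed curve. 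This is exactly where the two–block hypothesis is used: for a $T$ with several local maxima the slices become unions of arcs and the level set can disconnect, which is why the count is delicate for general $x$.

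It remains to count the attained heights. A direct computation with $T$ (whose values on a period run over the integer interval $[\,p-1-q,\;p-1\,]$) shows that the set of edge–heights is exactly $\{\tfrac12,\tfrac32,\dots,\tfrac{2p-1}{2}\}\cup\{-\tfrac{2k+1}{2},\dots,-\tfrac{2p-1}{2}\}$, which has $p+(p-k)=p+q=N$ elements; the $k$ central negative values $-\tfrac12,\dots,-\tfrac{2k-1}{2}$ are skipped, consistent with the $k$ nontrivial loops occupying the $k$ central positive heights (cross-checking \cref{prop:loop-count}(1)). Combining injectivity with the fact that each of these heights is genuinely attained yields exactly $N$ loops. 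I expect the main obstacle to be making injectivity fully rigorous when $k\neq0$: there the cell height is only defined on the universal cover, so the superlevel-set argument must be run on the infinite cylinder $\ZZ^2/\langle(N,N)\rangle$ (on which the height descends) and then pushed to the torus, tracking how the remaining translation shifts heights by $k$. The case $k=0$, where the height is globally defined on the torus and the attained heights are precisely $\{\pm\tfrac12,\dots,\pm\tfrac{N-1}{2}\}$, is the clean model for the whole argument.
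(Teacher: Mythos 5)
Your key step---``the edges at a fixed height form a single loop''---is false whenever $k(x)\neq 0$, which is the generic two-block case, and the failure is structural rather than a matter of rigor. Concretely, take $N=3$ and $x=(-1,+1,+1)$, so $p=2$, $q=1$, $k=1$. This pattern has exactly three loops: two trivial unit squares, bounding the cells with lower-left corners $(-1,0)$ and $(0,-1)$ in planar coordinates, and one nontrivial loop of length $10$ with homology class $(1,1)$. Now compute with your own height $T(j)-T(i)$ (here $T(0)=0$, $T(-1)=1$, $T(-2)=0$, $T(1)=1$, $T(2)=2$, $T(3)=1,\dots$): the square around the cell $(-1,0)$ has interior height $-1$ and all four neighboring cells at height $0$, so its edges sit at height $-1/2$; but the nontrivial loop passes through the edge from $(1,1)$ to $(2,1)$, whose adjacent cells $(1,1)$ and $(1,0)$ have heights $0$ and $-1$, so all of its edges also sit at height $-1/2$. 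Two disjoint loops share the same height, and correspondingly the boundary of your superlevel set $\{T(j)-T(i)\geq 0\}$ is disconnected: the square bounds an isolated hole in it. The unimodality argument breaks because the two-block hypothesis makes $T$ unimodal only on one period; globally $T$ is periodic plus drift $k$, so a slice such as $\{j\colon T(j)>0\}=\{-1\}\cup\{1,2,3,\dots\}$ is a union of several arcs, not one.

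Moreover, no repair along these lines is possible when $k\neq 0$, so the ``main obstacle'' you flag at the end is fatal rather than technical: two lifts of the same toroidal loop to the cylinder $\ZZ^2/\langle(N,N)\rangle$ that differ by the residual deck translation $(N,0)$ have heights differing by exactly $k$, so the height of a loop is well defined only modulo $k$ and can separate at most $|k|$ loops, whereas the lemma asserts there are $N>|k|$ loops as soon as both blocks are nonempty; by pigeonhole, no height-based bijection of any kind can exist (in the example above, every one of the three loops has lifts at every half-integer height). This also shows your claimed list of $N$ attained heights cannot be right: the set of heights of lifts of loops is invariant under translation by $k$, while your list is not, and indeed $-1/2$, which your list excludes, is attained in the example. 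Your argument is essentially sound only when $k=0$: there the height descends to the torus itself and unimodality on $\ZZ/N\ZZ$ really does make every slice a single cyclic arc, but that covers only balanced $x$. By contrast, the paper disposes of \cref{lem:two_blocks} by direct verification---for two-block $x$ the loops can be exhibited explicitly (see \cref{fig:00011111} and the left of \cref{fig:flip_bit})---and that, or an invariant genuinely finer than height, is what the case $k\neq 0$ requires.
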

\begin{lemma}\label{lem:switch}
If \cref{prop:loop-count}(3) holds for $x$, then it also holds for any $x'$ obtained by switching two adjacent bits of $x$.
\end{lemma}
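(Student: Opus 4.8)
The plan is to prove \cref{lem:switch} through the knot-theoretic realization advertised in the introduction, exploiting that triple-point moves preserve the number of Seifert circles. First I would dispose of the trivial case: if the two adjacent bits are equal, then $x'=x$ and there is nothing to prove, so I may assume we are swapping $x_i=+1$ and $x_{i+1}=-1$ (the opposite sign case being symmetric). Since a cyclic permutation of $x$ corresponds to a translation of the torus by $(1,1)$ and hence preserves the loop count of $\Cloth_{N,N}(x,x)$, I may place the swapped pair at any convenient position. The key structural input is that $\Cloth_{N,N}(x,x)$ is the Seifert state of the oriented link diagram whose strands are the $N$ horizontal circles (row $j$ oriented according to $x_j$) together with the $N$ vertical circles (column $i$ oriented according to $x_i$), with crossings at the grid vertices; one checks directly that the oriented (Seifert) smoothing at each crossing pairs the horizontal in-edge with the vertical out-edge, which is exactly the Hitomezashi turning rule, so that \emph{Hitomezashi loops are precisely the Seifert circles}.

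The next step is to interpret the bit swap as a move on this diagram. Swapping $x_i\leftrightarrow x_{i+1}$ amounts to transposing the geometric heights of horizontal circles $i$ and $i+1$ and, by the symmetry $x=y$, simultaneously transposing the positions of vertical circles $i$ and $i+1$, each circle carrying its orientation along. Thus $\Cloth_{N,N}(x',x')$ arises from $\Cloth_{N,N}(x,x)$ by an ambient isotopy of the \emph{oriented} link that slides circle $i$ past circle $i+1$ in both the horizontal and vertical families. I would decompose this isotopy into elementary Reidemeister moves. In the bulk, each time the sliding strand crosses a crossing formed by the other two strands one performs a triple-point (Reidemeister III) move, and by the classical fact that Reidemeister III preserves the number of Seifert circles, every one of these moves leaves the loop count unchanged.

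The remaining, genuinely exceptional, events are of two kinds: the moments where the two \emph{anti-parallel} circles $i$ and $i+1$ pass through each other (creating and then annihilating a bigon, a Reidemeister II configuration whose oriented smoothing can add or remove a small Seifert circle), and the interaction in the $2\times 2$ corner around the diagonal vertices $(i,i)$ and $(i+1,i+1)$, where the horizontal and vertical swaps meet. The symmetry $x=y$ is what should promote the parity count to a count modulo $4$: the off-diagonal exceptional reconnections occur in mirror-symmetric pairs across the diagonal, while the diagonal contributions occur at the two symmetric corners $(i,i)$ and $(i+1,i+1)$. Carefully enumerating these finitely many local reconnections and summing their effect on the Seifert-circle count, I expect to find that the total change is a multiple of $4$ (it need not vanish: e.g.\ passing from $x=(+,+,-,-)$ to $x'=(+,-,+,-)$ at $N=4$ changes the loop count from $4$ to $8$). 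Since $N$ is unchanged, this shows $\Cloth_{N,N}(x,x)$ and $\Cloth_{N,N}(x',x')$ have loop counts congruent modulo $4$, which is exactly \cref{lem:switch}.

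The main obstacle is the local bookkeeping of these exceptional events near the diagonal: one must pin down precisely which bigons are of anti-parallel type, track how the passing circles reconnect against the vertical/horizontal families they both cross, and verify that the symmetry genuinely upgrades the expected \emph{even} change into a \emph{multiple of four}. A secondary technical point is to confirm that every triple-point move actually occurring is performed in an orientation configuration for which the Seifert-circle count is preserved, and to handle the torus edge identifications so that the sliding isotopy is globally well defined. I would anchor the constant in the exceptional count by checking the smallest symmetric patterns directly, after which \cref{lem:two_blocks} and \cref{lem:switch} together reduce \cref{prop:loop-count}(3) to the two-block base case by repeatedly sweeping the block boundaries via adjacent swaps.
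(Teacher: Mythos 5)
Your central claim --- that ``Reidemeister III preserves the number of Seifert circles,'' so that all bulk triple-point moves are innocuous --- is false, and this is precisely where the difficulty of the lemma lives. A triple point move preserves the Seifert-circle (Hitomezashi loop) count only when the three strands are coherently, braid-like oriented (this is \cref{lem:adjacent}); when the orientations alternate, the move changes the count by $0$ or $\pm 2$ (\cref{lem:alternating}, Configurations I and III). Alternating triples are unavoidable in your setting, since the two strands being exchanged carry opposite orientations (you are swapping a $+$ with a $-$). So the ``bulk'' moves themselves produce uncontrolled jumps of $\pm 2$, and your plan --- which confines all nontrivial behavior to exceptional events near the diagonal and then ``expects'' the total to be a multiple of $4$ --- has no mechanism to control them. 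The paper's proof is exactly this missing mechanism: it organizes the triple point moves into pairs related by the $\ZZ/2$-symmetry of the annulus picture (reflect across and half-rotate along the central circle), observes that the two moves in a pair are both adjacent-type or both alternating-type, and then, in the alternating case, runs a delicate case analysis on how the six endpoints of each local picture are joined by Hitomezashi paths (no cross paths, exactly two, or at least four cross paths) to conclude that each \emph{pair} of moves changes the loop count by $0$ or $\pm 4$. Without this pairing-plus-configuration argument, invariance modulo $4$ does not follow.

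A secondary structural problem is your choice of diagram. You model $\Cloth_{N,N}(x,x)$ by $2N$ circles ($N$ rows and $N$ columns); but rows $i$ and $i+1$ are disjoint parallel curves, so sliding one past the other forces you to create and annihilate crossings between them via anti-parallel Reidemeister II moves, which also change the Seifert-circle count and add further uncontrolled events --- exactly the bigon bookkeeping you flag as an obstacle. The paper sidesteps this entirely by cutting the torus along the diagonal and redrawing the pattern on an annulus as $N$ hook-shaped strands (each consisting of one horizontal and one vertical side); adjacent strands there already intersect, so the bit swap is realized by triple point moves alone, with no Reidemeister II moves at all. In short, your reduction to knot-theoretic moves is the right instinct and matches the paper's strategy in spirit, but the proof's actual content --- the symmetry pairing and the cross-path analysis that upgrade $\pm2$ jumps to multiples of $4$ --- is absent, and the one ``classical fact'' you lean on in its place is not true.
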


\cref{lem:two_blocks} is a direct verification. Instead of carrying out the details, we refer the readers to \cref{fig:00011111} and the figure on the left in \cref{fig:flip_bit}.

For (4), after \cref{lem:two_blocks}, it suffices to show any the loop count is at least $N$ for any pattern $x\in\{-1,1\}^N$.

We devote the next subsection to generalize the notion of Hitomezashi loops to a wider class of graphs, in reminiscence of knot theory. After rephrasing our loop counting question, (4) comes for free, provided one assumes some deep results in knot theory. After that, we turn to proving \cref{lem:switch}, which will establish (3).

\subsection{Hitomezashi loops and Seifert circles}\label{sec:annulus}
In our setup, since any toroidal Hitomezashi loop does not cross the diagonal, we can redraw the grid onto the annulus as in \cref{fig:10110_move}, where the open segments on the left and right are pairwisely identified to close the grid up. Here, we think of the grid as the union of $N$ closed strands, each consists of one horizontal and one vertical side both of length $N$. Then the orientation given by the binary string $x$ corresponds to an orientation to each of the strands. The central circle of the annulus has homology class $\pm(1,1)$.

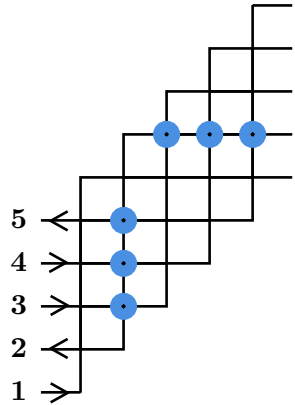
\begin{figure}[ht]
    \centering
\tikzset{every picture/.style={line width=0.75pt}} 

\begin{tikzpicture}[x=0.75pt,y=0.75pt,yscale=-.7,xscale=.7]

\draw  [draw opacity=0][line width=1]  (332.68,85.29) -- (363.67,85.29) -- (363.67,54.29) -- (332.68,54.29) -- cycle ; \draw  [line width=1]   ; \draw  [line width=1]   ; \draw  [line width=1]  (332.68,85.29) -- (363.67,85.29) -- (363.67,54.29) -- (332.68,54.29) -- cycle ;
\draw  [draw opacity=0][line width=1]  (239.68,209.28) -- (332.68,209.28) -- (332.68,178.28) -- (239.68,178.28) -- cycle ; \draw  [line width=1]  (270.68,209.28) -- (270.68,178.28)(301.68,209.28) -- (301.68,178.28) ; \draw  [line width=1]   ; \draw  [line width=1]  (239.68,209.28) -- (332.68,209.28) -- (332.68,178.28) -- (239.68,178.28) -- cycle ;
\draw  [draw opacity=0][line width=1]  (270.68,147.28) -- (363.67,147.28) -- (363.67,116.28) -- (270.68,116.28) -- cycle ; \draw  [line width=1]  (301.68,147.28) -- (301.68,116.28)(332.68,147.28) -- (332.68,116.28) ; \draw  [line width=1]   ; \draw  [line width=1]  (270.68,147.28) -- (363.67,147.28) -- (363.67,116.28) -- (270.68,116.28) -- cycle ;
\draw  [draw opacity=0][line width=1]  (239.68,178.28) -- (363.67,178.28) -- (363.67,147.28) -- (239.68,147.28) -- cycle ; \draw  [line width=1]  (270.68,178.28) -- (270.68,147.28)(301.68,178.28) -- (301.68,147.28)(332.68,178.28) -- (332.68,147.28) ; \draw  [line width=1]   ; \draw  [line width=1]  (239.68,178.28) -- (363.67,178.28) -- (363.67,147.28) -- (239.68,147.28) -- cycle ;
\draw  [draw opacity=0][line width=1]  (301.68,116.28) -- (363.67,116.28) -- (363.67,85.29) -- (301.68,85.29) -- cycle ; \draw  [line width=1]  (332.68,116.28) -- (332.68,85.29) ; \draw  [line width=1]   ; \draw  [line width=1]  (301.68,116.28) -- (363.67,116.28) -- (363.67,85.29) -- (301.68,85.29) -- cycle ;
\draw  [draw opacity=0][line width=1]  (239.68,240.27) -- (301.68,240.27) -- (301.68,209.28) -- (239.68,209.28) -- cycle ; \draw  [line width=1]  (270.68,240.27) -- (270.68,209.28) ; \draw  [line width=1]   ; \draw  [line width=1]  (239.68,240.27) -- (301.68,240.27) -- (301.68,209.28) -- (239.68,209.28) -- cycle ;
\draw  [draw opacity=0][line width=1]  (239.68,271.27) -- (270.68,271.27) -- (270.68,240.27) -- (239.68,240.27) -- cycle ; \draw  [line width=1]   ; \draw  [line width=1]   ; \draw  [line width=1]  (239.68,271.27) -- (270.68,271.27) -- (270.68,240.27) -- (239.68,240.27) -- cycle ;
\draw  [draw opacity=0][line width=1]  (363.67,147.28) -- (392.22,147.28) -- (392.22,23) -- (363.67,23) -- cycle ; \draw  [line width=1]  (363.67,147.28) -- (363.67,23) ; \draw  [line width=1]  (363.67,147.28) -- (392.22,147.28)(363.67,116.28) -- (392.22,116.28)(363.67,85.29) -- (392.22,85.29)(363.67,54.29) -- (392.22,54.29)(363.67,23.29) -- (392.22,23.29) ; \draw  [line width=1]   ;
\draw  [draw opacity=0][line width=1]  (239.68,178.28) -- (211.14,178.28) -- (211.14,302.56) -- (239.68,302.56) -- cycle ; \draw  [line width=1]  (239.68,178.28) -- (239.68,302.56) ; \draw  [line width=1]  (239.68,178.28) -- (211.14,178.28)(239.68,209.28) -- (211.14,209.28)(239.68,240.27) -- (211.14,240.27)(239.68,271.27) -- (211.14,271.27)(239.68,302.27) -- (211.14,302.27) ; \draw  [line width=1]   ;
\draw  [line width=1]  (218.16,309.5) -- (230.39,302.64) -- (218.16,295.78) ;
\draw  [line width=1]  (218.16,246.51) -- (230.39,239.65) -- (218.16,232.78) ;
\draw  [line width=1]  (217.17,215.51) -- (229.41,208.65) -- (217.17,201.79) ;
\draw  [line width=1]  (230.39,171.78) -- (218.16,178.65) -- (230.39,185.51) ;
\draw  [line width=1]  (230.39,263.78) -- (218.16,270.65) -- (230.39,277.51) ;
\draw  [color={rgb, 255:red, 74; green, 144; blue, 226 }  ,draw opacity=1 ][line width=6]  (266.68,240.27) .. controls (266.68,238.07) and (268.47,236.27) .. (270.68,236.27) .. controls (272.89,236.27) and (274.68,238.07) .. (274.68,240.27) .. controls (274.68,242.48) and (272.89,244.27) .. (270.68,244.27) .. controls (268.47,244.27) and (266.68,242.48) .. (266.68,240.27) -- cycle ;
\draw  [color={rgb, 255:red, 74; green, 144; blue, 226 }  ,draw opacity=1 ][line width=6]  (266.68,209.28) .. controls (266.68,207.07) and (268.47,205.28) .. (270.68,205.28) .. controls (272.89,205.28) and (274.68,207.07) .. (274.68,209.28) .. controls (274.68,211.49) and (272.89,213.28) .. (270.68,213.28) .. controls (268.47,213.28) and (266.68,211.49) .. (266.68,209.28) -- cycle ;
\draw  [color={rgb, 255:red, 74; green, 144; blue, 226 }  ,draw opacity=1 ][line width=6]  (266.68,178.28) .. controls (266.68,176.07) and (268.47,174.28) .. (270.68,174.28) .. controls (272.89,174.28) and (274.68,176.07) .. (274.68,178.28) .. controls (274.68,180.49) and (272.89,182.28) .. (270.68,182.28) .. controls (268.47,182.28) and (266.68,180.49) .. (266.68,178.28) -- cycle ;
\draw  [color={rgb, 255:red, 74; green, 144; blue, 226 }  ,draw opacity=1 ][line width=6]  (297.68,116.28) .. controls (297.68,114.07) and (299.47,112.28) .. (301.68,112.28) .. controls (303.89,112.28) and (305.68,114.07) .. (305.68,116.28) .. controls (305.68,118.49) and (303.89,120.28) .. (301.68,120.28) .. controls (299.47,120.28) and (297.68,118.49) .. (297.68,116.28) -- cycle ;
\draw  [color={rgb, 255:red, 74; green, 144; blue, 226 }  ,draw opacity=1 ][line width=6]  (328.68,116.28) .. controls (328.68,114.07) and (330.47,112.28) .. (332.68,112.28) .. controls (334.88,112.28) and (336.68,114.07) .. (336.68,116.28) .. controls (336.68,118.49) and (334.88,120.28) .. (332.68,120.28) .. controls (330.47,120.28) and (328.68,118.49) .. (328.68,116.28) -- cycle ;
\draw  [color={rgb, 255:red, 74; green, 144; blue, 226 }  ,draw opacity=1 ][line width=6]  (359.67,116.28) .. controls (359.67,114.07) and (361.46,112.28) .. (363.67,112.28) .. controls (365.88,112.28) and (367.67,114.07) .. (367.67,116.28) .. controls (367.67,118.49) and (365.88,120.28) .. (363.67,120.28) .. controls (361.46,120.28) and (359.67,118.49) .. (359.67,116.28) -- cycle ;

\draw (187,292) node [anchor=north west][inner sep=0.75pt]   [align=left] {\textbf{{\large 1}}};
\draw (187,261) node [anchor=north west][inner sep=0.75pt]   [align=left] {\textbf{{\large 2}}};
\draw (187,230) node [anchor=north west][inner sep=0.75pt]   [align=left] {{\large \textbf{3}}};
\draw (187,199) node [anchor=north west][inner sep=0.75pt]   [align=left] {\textbf{{\large 4}}};
\draw (187,168) node [anchor=north west][inner sep=0.75pt]   [align=left] {\textbf{{\large 5}}};

\end{tikzpicture}

    \caption{The symmetric toroidal Hitomezashi pattern for $x=+-++-$, redrawn on the annulus. The blue points indicate the intersections that the first strand passes across in the sequence of triple point moves that exchanges the first and second strands.}
    \label{fig:10110_move}
\end{figure}
\begin{figure}
    \centering
    \scalebox{.6}{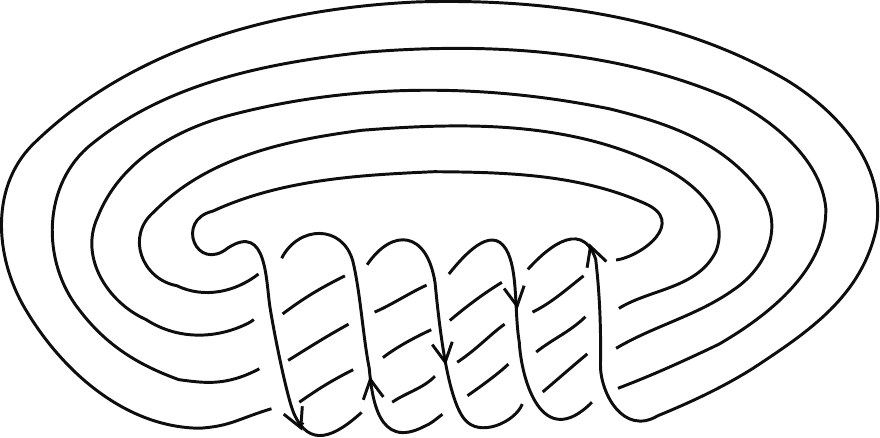}
    \caption{A link diagram for the torus link $T(5,5)$, equipped with an orientation. The underlying link-like graph is $G'(x)$, $x=+-++-$.}
    \label{fig:T55}
\end{figure}

We generalize the notion of Hitomezashi loops a bit. In a $4$-regular graph drawn on the plane, any two edges sharing the same vertex are either adjacent or opposite to each other. A \textbf{link-like graph} is a directed $4$-regular planar graph, drawn on the plane such that any two opposite edges at any vertex point to the same direction in the plane (See \cref{fig:crossing}). Such a graph is an oriented link diagram (in the sense of knot theory) with the overpass/underpass information forgotten. A \textbf{Hitomezashi loop} of a link-like graph $G$ is a directed circuit in $G$ whose any two consecutive edges are adjacent. Then, our grid on the annulus, upon forgetting the $2N$ corners, is such a graph $G'(x)$ (it is planar because the annulus embeds naturally into the plane), and its Hitomezashi loops correspond exactly to those in the sense of our previous definition.

\begin{figure}[t]

\tikzset{every picture/.style={line width=0.75pt}} 

\begin{tikzpicture}[x=0.75pt,y=0.75pt,yscale=-.8,xscale=.8]
\centering

\draw [line width=1]    (271.21,95.5) .. controls (323.83,135.15) and (256.18,195.41) .. (280.23,252.5) ;
\draw [line width=1]    (178,180.14) .. controls (201,194.5) and (299.78,134.15) .. (396,195.2) ;
\draw  [line width=3]  (289,169) .. controls (289,169.38) and (287.88,170.5) .. (286.5,170.5) .. controls (285.12,170.5) and (284,169.38) .. (284,168) .. controls (284,166.62) and (285.12,165.5) .. (286.5,165.5) .. controls (287.88,165.5) and (289,166.62) .. (289,168) -- cycle ;
\draw  [line width=1]  (300.27,117.22) -- (292.72,135.2) -- (281.32,119.38) ;
\draw  [line width=1]  (287.93,202.55) -- (274.89,217.05) -- (269.32,198.36) ;
\draw  [line width=1]  (345.06,182.86) -- (330.44,169.96) -- (349.08,164.22) ;
\draw  [line width=1]  (246.73,182.74) -- (228.37,176.17) -- (243.56,163.94) ;

\end{tikzpicture}

\caption{A possible orientation assignments to edges adjacent to a vertex in a link-like graph.}
\label{fig:crossing}
\end{figure}
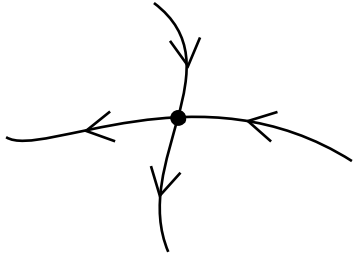

Two remarks are in order. First, in the knot theoretic context, our definition of Hitomezashi loops corresponds to the notion of \textit{Seifert circles} of an oriented link diagram. Second, the graph $G'(x)$ defined above is the underlying graph of the link diagram for the torus link $T(N,N)$ drawn as the braid closure of the full twist $(\sigma_1\cdots\sigma_{N-1})^N$ in the braid group $B_N$, with a possibly nonstandard orientation, see \cref{fig:T55}. See e.g. \cite{birman1974braids,yamada1987minimal} for relevant definitions in knot theory. 

\begin{proof}[Proof of \cref{prop:loop-count}(4)]
By Yamada \cite{yamada1987minimal}, the number of Seifert circles in any diagram of an oriented link $L$ is bounded below by its bridge index. On the other hand, the bridge index of a link is bounded below by its number of components. Apply these to the $N$-component link $L=T(N,N)$ with the orientation corresponding to the given pattern $x$, we see the Hitomezashi loop count is bounded below by $N$.
\end{proof}

We proceed to establish some general structural results before proving \cref{lem:switch}. Every link-like graph admits a checkerboard coloring, from which the following lemma is immediate by considering the color on the left of a Hitomezashi loop.
\begin{lemma}\label{lem:vertex_once}
A Hitomezashi loop of a link-like graph passes any vertex at most once.\qed
\end{lemma}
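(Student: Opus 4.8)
The plan is to exploit the checkerboard coloring promised in the statement together with a \emph{left color} invariant for Hitomezashi loops. First I would record that a checkerboard coloring exists: the underlying $4$-regular planar graph $G$ is Eulerian (every vertex has even degree), and the faces of an Eulerian plane graph admit a proper $2$-coloring. Fix such a coloring with colors black and white. Now orient attention to a single Hitomezashi loop $\gamma$ and, walking along its orientation, consider the color of the face lying immediately to the \textbf{left} of $\gamma$. Along the interior of any single edge this is unambiguous, and crossing an edge swaps left and right faces to the two faces bordering that edge, which have opposite colors. The whole argument reduces to controlling what happens to the left face at the vertices.

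The key step is to show that the left color is \emph{constant} along $\gamma$. Recall that the link-like orientation condition (opposite edges point the same way, as in \cref{fig:crossing}) forces exactly two in-edges and two out-edges at each vertex, with the two in-edges adjacent and the two out-edges adjacent; since consecutive edges of a Hitomezashi loop are adjacent rather than opposite, $\gamma$ always \emph{turns} at a vertex, entering on an in-edge and leaving on an adjacent out-edge. There are two turning types. If the face wedged in the sharp corner between the incoming and outgoing edge lies on the left of $\gamma$, then that single corner face is the left face both before and after the turn, so the color is literally unchanged. In the other case the corner face lies on the right, and the left faces before and after the turn are the two faces diagonally opposite across the vertex; by the checkerboard property these two faces share a color. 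In either case the left color is preserved at the vertex, so it is a well-defined invariant of $\gamma$; say it is black.

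Finally I would derive the contradiction from a double visit. If $\gamma$ passed some vertex $v$ twice, then its two passages would use both in-edges and both out-edges of $v$, giving two turning strands of $\gamma$ through $v$. Applying the local analysis of the previous paragraph to each strand, the constancy of the left color forces the corner faces recorded on the left of the two strands all to be black. Tracking which corner faces these are, one finds that two corner faces sharing an incident edge of $v$ are both required to be black, contradicting the checkerboard coloring (adjacent faces have opposite colors). Hence no vertex is visited twice. The only genuinely delicate point, and the step I expect to require the most care, is the bookkeeping at a turn: verifying precisely that a turn either fixes the left face or sends it to the diagonally opposite same-colored face, and then pinning down at a doubly-visited vertex that the two strands' left corners include an adjacent pair. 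This is routine once the cyclic order of edges and faces around a vertex is fixed, but it is where sign/orientation conventions must be handled consistently.
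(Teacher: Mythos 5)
Your proof is correct and follows essentially the same route as the paper, which deduces the lemma from the existence of a checkerboard coloring by tracking the color of the face on the left of the loop. You have simply spelled out the details the paper leaves implicit (existence of the face $2$-coloring, invariance of the left color at turns, and the contradiction at a doubly visited vertex), and that bookkeeping is carried out correctly.
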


Lastly we remark that the annulus admits a $\ZZ/2$-symmetry by reflecting across and do a half rotation along the central circle (which exchanges the horizontal and vertical sides of each of the $N$ strand respectively). It will be of use that our graph $G'(x)$, thus the whole Hitomezashi pattern, is invariant under this $\ZZ/2$-symmetry.

\subsection{The triple point move}
The triple point move, also known as the Reidemeister III move in the knot theoretic context, is the local move that changes one link-like graph to another as shown in \cref{fig:triple}, thinking as passing the top strand across the bottom intersection point. The orientations of the strands do not change under the move. Note this move is reversible and cyclically symmetric.
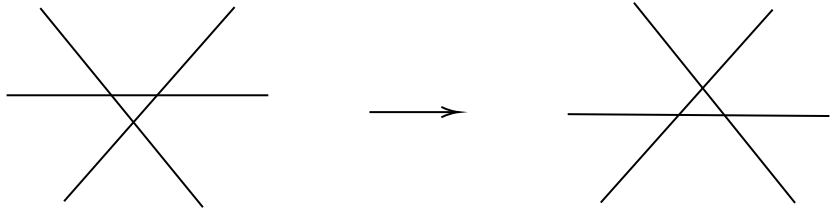
\begin{figure}[t]
    \centering
\tikzset{every picture/.style={line width=0.75pt}} 

\tikzset{every picture/.style={line width=0.75pt}} 

\begin{tikzpicture}[x=0.75pt,y=0.75pt,yscale=-1,xscale=1]

\draw    (109,114) -- (241,114) ;
\draw    (126,70) -- (208,170.5) ;
\draw    (224,69.5) -- (138,167.5) ;
\draw    (292,122.5) -- (335,122.5) ;
\draw [shift={(337,122.5)}, rotate = 180] [color={rgb, 255:red, 0; green, 0; blue, 0 }  ][line width=0.75]    (8.74,-2.63) .. controls (5.56,-1.12) and (2.65,-0.24) .. (0,0) .. controls (2.65,0.24) and (5.56,1.12) .. (8.74,2.63)   ;
\draw    (524,124.47) -- (392,123.53) ;
\draw    (506.68,168.35) -- (425.41,67.27) ;
\draw    (408.68,168.15) -- (495.38,70.77) ;

\end{tikzpicture}
    \caption{The triple point move}
    \label{fig:triple}
\end{figure}

Label the strands of $G'(x)$ from bottom to top by $1,2,\cdots,N$ as in Figure~\ref{fig:10110_move}. Perform a sequence of triple point moves by passing the first strand across the $2N-2$ intersection points of the second strand with the rest strands (marked blue in \cref{fig:10110_move}), bottom to top, left to right, we arrive at a link-like graph isomorphic to $G'(x')$ where $x'$ is the binary string obtained by exchanging the first two bits of $x$.

To prove Lemma~\ref{lem:switch}, we examine the change of the number of Hitomezashi loops, and their homology classes, under a general triple point move.

The three strands involved in a triple point move have either adjacent (\cref{fig:adjacent}) or alternating orientations (\cref{fig:alternating}).

\begin{lemma}[Triple point move for adjacent orientations]\label{lem:adjacent}
If the orientations of the three strands in a triple point move are adjacent, then the number of Hitomezashi loops does not change under the triple point move.
\end{lemma}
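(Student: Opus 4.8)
The plan is to treat the triple point move as a purely local modification inside a disk $D$ that contains the three crossings, and to show that in the adjacent case the number of Hitomezashi loops is governed entirely by local data that the move preserves. First I would make precise a \emph{localization principle}: the Seifert-type smoothing that defines Hitomezashi loops is orientation-respecting and ignores over/under information, so it is well defined on link-like graphs, and smoothing every crossing turns the whole diagram into a disjoint union of oriented simple closed curves, which are exactly its Hitomezashi loops. Restricting this $1$-manifold to $D$ gives a disjoint union of embedded arcs, pairing up the six points of $\partial D \cap (\text{strands})$, together with possibly some closed loops lying entirely in the interior of $D$. Since the diagram outside $D$ is unchanged by the move, the total loop count equals the number of global loops obtained by gluing the interior arc pairing to the fixed exterior, plus the number of interior closed loops. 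Hence it suffices to show that, for adjacent orientations, the move changes neither the boundary pairing nor the number of interior closed loops.

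Next I would reduce to a single configuration. The triple point move carries a dihedral symmetry (the rotations and reflections of \cref{fig:triple}), and global orientation reversal acts equivariantly on the Seifert smoothing; modding out by these, I expect the adjacent case to have an essentially unique representative. Here I would pin down the meaning of \emph{adjacent}: it is exactly the case in which the three oriented strand-segments bounding the central triangle do \emph{not} form a directed cycle, equivalently the three strands are coherently oriented across $D$ as in a braid word $\sigma_i\sigma_{i+1}\sigma_i$ (see \cref{fig:adjacent}); by contrast the alternating case of \cref{fig:alternating} is the directed-cycle case.

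Then I would compute the smoothing in this representative. For coherently oriented strands the Seifert smoothing at each crossing simply uncrosses the two strands, so inside $D$ the smoothed $1$-manifold is three pairwise disjoint arcs joining each entry point to the corresponding exit point, with no interior closed loop; crucially, the braid words $\sigma_i\sigma_{i+1}\sigma_i$ and $\sigma_{i+1}\sigma_i\sigma_{i+1}$ both smooth to the three straight strands joining the $i$-th entry point to the $i$-th exit point, so the two sides of the move induce the same boundary pairing and create no closed loop. The localization principle then gives the lemma.

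The main obstacle I anticipate is twofold. The first is giving a clean, symmetry-based argument that every adjacent configuration really does reduce to the single coherent representative, so that no genuinely different adjacent sub-case is overlooked; this requires checking that the normalizing symmetries act as claimed on the orientations. The second, and more essential, point is verifying that no closed loop is ever created inside $D$ in the adjacent case: the central triangle becomes a Hitomezashi loop precisely when its three strand-segments form a directed cycle, which is exactly the alternating case excluded here, so confirming its absence is where the adjacent/alternating dichotomy does the real work. Once these are in hand, the boundary-pairing computation is routine.
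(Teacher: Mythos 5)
Your proposal is correct and takes essentially the same approach as the paper: the paper's entire proof is the observation that in the adjacent case the local smoothed picture---the three arcs pairing the six boundary points, with no interior closed component---is identical on both sides of the move (see \cref{fig:adjacent}), so the global loop count cannot change. Your localization principle, the identification of ``adjacent'' with the coherently oriented (non-directed-cycle) configuration, and the braid-word smoothing computation simply make rigorous the details the paper delegates to its figure.
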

\begin{figure}[t]
\centering

\tikzset{every picture/.style={line width=0.75pt}} 

\begin{tikzpicture}[x=0.75pt,y=0.75pt,yscale=-.8,xscale=.8]

\draw    (292,122.5) -- (335,122.5) ;
\draw [shift={(337,122.5)}, rotate = 180] [color={rgb, 255:red, 0; green, 0; blue, 0 }  ][line width=0.75]    (8.74,-2.63) .. controls (5.56,-1.12) and (2.65,-0.24) .. (0,0) .. controls (2.65,0.24) and (5.56,1.12) .. (8.74,2.63)   ;
\draw [color={rgb, 255:red, 74; green, 144; blue, 226 }  ,draw opacity=1 ][line width=2]    (127.06,181) -- (173.39,126.3) ;
\draw [color={rgb, 255:red, 74; green, 144; blue, 226 }  ,draw opacity=1 ][line width=2]    (155.86,104.73) -- (173.39,126.3) ;
\draw [color={rgb, 255:red, 74; green, 144; blue, 226 }  ,draw opacity=1 ][line width=2]    (92,102.42) -- (155.86,104.73) ;
\draw [color={rgb, 255:red, 208; green, 2; blue, 27 }  ,draw opacity=1 ][line width=2]    (155.86,104.73) -- (192.17,104.73) ;
\draw [color={rgb, 255:red, 208; green, 2; blue, 27 }  ,draw opacity=1 ][line width=2]    (173.39,126.3) -- (192.17,104.73) ;
\draw [color={rgb, 255:red, 126; green, 211; blue, 33 }  ,draw opacity=1 ][line width=2]    (192.17,104.73) -- (241,104.73) ;
\draw  [color={rgb, 255:red, 0; green, 0; blue, 0 }  ,draw opacity=1 ][line width=1.5]  (121.03,91.84) -- (105.72,102.41) -- (121.03,112.98) ;
\draw [color={rgb, 255:red, 126; green, 211; blue, 33 }  ,draw opacity=1 ][line width=2]    (192.17,104.73) -- (224.72,64.68) ;
\draw [color={rgb, 255:red, 208; green, 2; blue, 27 }  ,draw opacity=1 ][line width=2]    (173.39,126.3) -- (217.21,177.92) ;
\draw [color={rgb, 255:red, 208; green, 2; blue, 27 }  ,draw opacity=1 ][line width=2]    (118.92,62.75) -- (155.86,104.73) ;
\draw  [color={rgb, 255:red, 0; green, 0; blue, 0 }  ,draw opacity=1 ][line width=1.5]  (147.71,80.59) -- (130.84,75.4) -- (135.26,95.96) ;
\draw  [color={rgb, 255:red, 0; green, 0; blue, 0 }  ,draw opacity=1 ][line width=1.5]  (210.54,98.41) -- (213.72,77.66) -- (197.17,83.78) ;
\draw [color={rgb, 255:red, 74; green, 144; blue, 226 }  ,draw opacity=1 ][line width=2]    (383.22,131.99) -- (447.08,134.3) ;
\draw [color={rgb, 255:red, 208; green, 2; blue, 27 }  ,draw opacity=1 ][line width=2]    (447.08,134.3) -- (483.39,134.3) ;
\draw [color={rgb, 255:red, 208; green, 2; blue, 27 }  ,draw opacity=1 ][line width=2]    (447.08,134.3) -- (465.86,112.73) ;
\draw [color={rgb, 255:red, 126; green, 211; blue, 33 }  ,draw opacity=1 ][line width=2]    (483.39,134.3) -- (532.22,134.3) ;
\draw  [color={rgb, 255:red, 0; green, 0; blue, 0 }  ,draw opacity=1 ][line width=1.5]  (415.03,121.84) -- (399.72,132.41) -- (415.03,142.98) ;
\draw [color={rgb, 255:red, 126; green, 211; blue, 33 }  ,draw opacity=1 ][line width=2]    (466.17,112.73) -- (498.72,72.68) ;
\draw [color={rgb, 255:red, 208; green, 2; blue, 27 }  ,draw opacity=1 ][line width=2]    (428.92,70.75) -- (465.86,112.73) ;
\draw  [color={rgb, 255:red, 0; green, 0; blue, 0 }  ,draw opacity=1 ][line width=1.5]  (457.71,90.59) -- (440.84,85.4) -- (445.26,105.96) ;
\draw  [color={rgb, 255:red, 0; green, 0; blue, 0 }  ,draw opacity=1 ][line width=1.5]  (484.54,106.41) -- (487.72,85.66) -- (471.17,91.78) ;
\draw [color={rgb, 255:red, 74; green, 144; blue, 226 }  ,draw opacity=1 ][line width=2]    (414.52,174.36) -- (447.08,134.3) ;
\draw [color={rgb, 255:red, 126; green, 211; blue, 33 }  ,draw opacity=1 ][line width=2]    (465.86,112.73) -- (483.39,134.3) ;
\draw [color={rgb, 255:red, 208; green, 2; blue, 27 }  ,draw opacity=1 ][line width=2]    (483.39,134.3) -- (520.32,176.29) ;
\end{tikzpicture}
    \caption{The triple point move for adjacent orientations does not change the topology of Hitomezashi loops.}
    \label{fig:adjacent}
\end{figure}
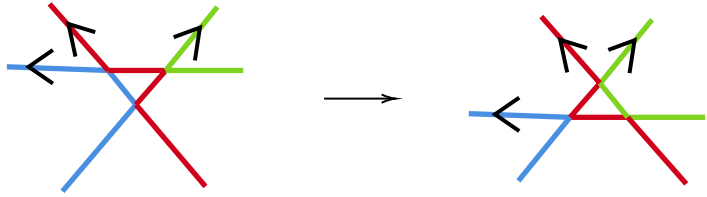
\begin{proof}
The local picture does not change. See \cref{fig:adjacent}. Note by Lemma~\ref{lem:vertex_once} the three colored paths as shown do belong to different Hitomezashi loops, although we do not need this fact for our argument.
\end{proof}

If the orientations of the three strands in a triple point move are alternating, complete the local fragments involved in the move into Hitomezashi loops. Up to overall orientation reversal, cyclic permutation, and passing strands across $\infty$, there are three configurations, as shown in \cref{fig:alternating}. From left to right, we call them Configurations I, II, III, respectively. Then the triple point move switches Configurations I and III and leaves Configuration II invariant. As a corollary we obtain the following.
\begin{lemma}[Triple point move for alternating orientations]\label{lem:alternating}
If the orientations of the three strands in a triple point move are alternating, then under the move, the number of Hitomezashi loops decreases/increases by two for Configurations I, III, and does not change for Configuration II.\qed
\end{lemma}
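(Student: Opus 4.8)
The plan is to prove the lemma by completing the smoothed local fragments into global Hitomezashi loops and counting, exactly in the spirit of \cref{lem:adjacent}. Recall that a Hitomezashi loop of a link-like graph is the Seifert circle produced by the orientation-respecting smoothing at each vertex, and that the triple point move is supported in a disk $D$ meeting the graph in three strands, hence in six oriented endpoints on $\partial D$. First I would record, both before and after the move, the smoothed arc system inside $D$: when the three orientations alternate, exactly three of the boundary endpoints are incoming and three outgoing, and this already constrains the smoothed tangle inside $D$ to a short list. I would then invoke the reduction stated just before the lemma: up to overall orientation reversal, cyclic permutation of the three strands, and sliding a strand across $\infty$, every completion of these fragments into full loops is one of Configurations I, II, III of \cref{fig:alternating}, and the move carries I to III (and conversely) while fixing II, since it merely repositions the moving strand and the three smoothings reconnect accordingly.

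\textbf{Counting in each configuration.} With the three configurations fixed, the count becomes a direct trace of the colored loops in \cref{fig:alternating}, using \cref{lem:vertex_once} to decide which arcs lie on distinct loops. In Configuration II the two smoothed pictures are isotopic rel $\partial D$, so the joining pattern of the six endpoints and any internal closed loops are unchanged, giving $\Delta(\#\text{loops})=0$. In Configurations I and III the arc-joining pattern genuinely changes and an internal loop is created or destroyed, so the count changes; tracing the figure shows the change is by two, with the sign opposite in the two configurations because the move swaps them.

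\textbf{Pinning the change to exactly two.} To make the ``$2$'' robust rather than merely read off a picture, I would add the parity check that the total number of Seifert circles $s$ of any oriented diagram satisfies $s \equiv c + \mu \pmod 2$, where $c$ is the number of crossings and $\mu$ the number of link components. This follows from Seifert's algorithm: its surface has Euler characteristic $s - c$, and for an orientable surface with $\mu$ boundary circles this is $\equiv \mu \pmod 2$. A triple point move preserves both $c$ (it is $\sigma_1\sigma_2\sigma_1 \leftrightarrow \sigma_2\sigma_1\sigma_2$, three crossings throughout) and $\mu$ (the strand connectivity, hence the link type, is unchanged), so $\Delta s$ is forced to be even. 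Since only three strands and three crossings meet $D$, the number of loops meeting $D$ is bounded, so the nonzero even change seen in Configurations I and III can only be $\pm 2$. Combined with the swap $\mathrm{I}\leftrightarrow\mathrm{III}$, this yields an increase of two in one configuration and a decrease of two in the other, as claimed.

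\textbf{Main obstacle.} The delicate point is the completeness of the three-configuration enumeration: one must verify that the stated symmetries really do collapse all alternating completions onto Configurations I, II, III with none omitted and none double counted, and that the loop structure in each is traced correctly (in particular, that Configuration II is isotopic rel boundary, while I and III differ by both a rerouting and an internal loop). Everything after this enumeration is bookkeeping, with the identity $s \equiv c + \mu \pmod 2$ serving as a guarantee that no odd change can slip in.
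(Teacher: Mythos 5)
Your scaffolding follows the paper's own route: reduce to Configurations I, II, III of \cref{fig:alternating} via the stated symmetries, observe that the move swaps I and III while preserving the type of II, and read the counts off the figure. Your parity identity $s \equiv c + \mu \pmod 2$ (Seifert circles, crossings, components) is correct and is a genuine addition not in the paper; since a triple point move preserves both the crossing count and the strand connectivity, it does force the change in loop count to be even, which is a nice sanity check.

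However, both mechanisms you offer in place of actually tracing the smoothings are false, and the counting is the substance of the lemma. First, in Configuration II the before and after pictures are \emph{not} isotopic rel $\partial D$: under the move, the Seifert resolution inside $D$ changes its boundary pairing in \emph{every} configuration --- the three arcs join the six endpoints in one ``adjacent'' pattern before the move and in the cyclically shifted pattern after (this is precisely why any configuration can change at all). What is true in Configuration II is that the particular external wiring closes both internal pairings up into the same \emph{number} of loops (two arc-loops plus the central circle), even though the loops themselves are different; the joining pattern is emphatically not ``unchanged.'' Second, in Configurations I and III no internal loop is created or destroyed: in the alternating case the small central Seifert circle is present both before and after the move (it is the orange triangle appearing in all three pictures of \cref{fig:alternating}). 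The change of two comes from three arc-loops merging into a single loop (I $\to$ III), or the reverse; attributing it to the birth or death of a closed component misreads the figure. Since your parity argument only constrains the change to be even, and your boundedness argument needs a correct trace to decide which configurations give a nonzero change, these two errors leave the core of the lemma unestablished. The repair is straightforward: carry out the smoothing explicitly, record that the internal tangle is three arcs plus the central circle on both sides of the move with the two different arc pairings, and then close up with each of the three external wirings to get the counts $4\leftrightarrow 2$ (I, III) and $3\to 3$ (II).
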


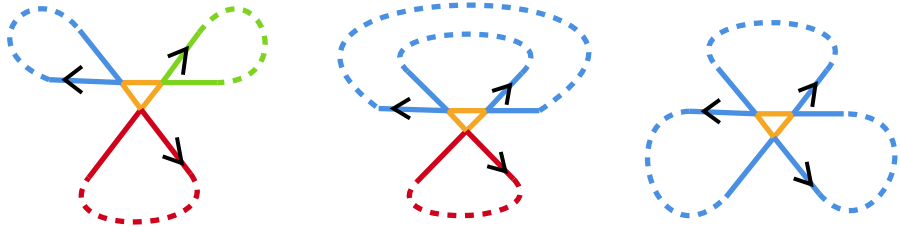
\begin{figure}[t]
    \centering
\tikzset{every picture/.style={line width=0.75pt}} 

\begin{tikzpicture}[x=0.75pt,y=0.75pt,yscale=-.8,xscale=.8]

\draw [color={rgb, 255:red, 208; green, 2; blue, 27 }  ,draw opacity=1 ][line width=2]    (106.8,153.97) -- (139.74,110.99) ;
\draw [color={rgb, 255:red, 245; green, 166; blue, 35 }  ,draw opacity=1 ][line width=2]    (127.28,94.04) -- (139.74,110.99) ;
\draw [color={rgb, 255:red, 74; green, 144; blue, 226 }  ,draw opacity=1 ][line width=2]    (81.88,92.23) -- (127.28,94.04) ;
\draw [color={rgb, 255:red, 245; green, 166; blue, 35 }  ,draw opacity=1 ][line width=2]    (127.28,94.04) -- (153.09,94.04) ;
\draw [color={rgb, 255:red, 245; green, 166; blue, 35 }  ,draw opacity=1 ][line width=2]    (139.74,110.99) -- (153.09,94.04) ;
\draw [color={rgb, 255:red, 126; green, 211; blue, 33 }  ,draw opacity=1 ][line width=2]    (153.09,94.04) -- (187.81,94.04) ;
\draw  [color={rgb, 255:red, 0; green, 0; blue, 0 }  ,draw opacity=1 ][line width=1.5]  (102.52,83.9) -- (91.63,92.21) -- (102.52,100.52) ;
\draw [color={rgb, 255:red, 126; green, 211; blue, 33 }  ,draw opacity=1 ][line width=2]    (153.09,94.04) -- (176.24,62.56) ;
\draw [color={rgb, 255:red, 208; green, 2; blue, 27 }  ,draw opacity=1 ][line width=2]    (139.74,110.99) -- (170.9,151.55) ;
\draw [color={rgb, 255:red, 74; green, 144; blue, 226 }  ,draw opacity=1 ][line width=2]    (101.02,61.05) -- (127.28,94.04) ;
\draw  [color={rgb, 255:red, 0; green, 0; blue, 0 }  ,draw opacity=1 ][line width=1.5]  (153.4,140.59) -- (165.36,144.8) -- (162.37,128.61) ;
\draw  [color={rgb, 255:red, 0; green, 0; blue, 0 }  ,draw opacity=1 ][line width=1.5]  (166.16,89.08) -- (168.41,72.76) -- (156.65,77.58) ;
\draw [color={rgb, 255:red, 208; green, 2; blue, 27 }  ,draw opacity=1 ][line width=2]    (313.39,156.9) -- (344.91,124.53) ;
\draw [color={rgb, 255:red, 245; green, 166; blue, 35 }  ,draw opacity=1 ][line width=2]    (332.98,111.77) -- (344.91,124.53) ;
\draw [color={rgb, 255:red, 74; green, 144; blue, 226 }  ,draw opacity=1 ][line width=2]    (289.54,110.4) -- (332.98,111.77) ;
\draw [color={rgb, 255:red, 245; green, 166; blue, 35 }  ,draw opacity=1 ][line width=2]    (332.98,111.77) -- (357.68,111.77) ;
\draw [color={rgb, 255:red, 245; green, 166; blue, 35 }  ,draw opacity=1 ][line width=2]    (344.91,124.53) -- (357.68,111.77) ;
\draw [color={rgb, 255:red, 74; green, 144; blue, 226 }  ,draw opacity=1 ][line width=2]    (357.68,111.77) -- (390.91,111.77) ;
\draw  [color={rgb, 255:red, 0; green, 0; blue, 0 }  ,draw opacity=1 ][line width=1.5]  (309.29,104.14) -- (298.87,110.39) -- (309.29,116.65) ;
\draw [color={rgb, 255:red, 74; green, 144; blue, 226 }  ,draw opacity=1 ][line width=2]    (357.68,111.77) -- (379.83,88.06) ;
\draw [color={rgb, 255:red, 208; green, 2; blue, 27 }  ,draw opacity=1 ][line width=2]    (344.91,124.53) -- (374.72,155.07) ;
\draw [color={rgb, 255:red, 74; green, 144; blue, 226 }  ,draw opacity=1 ][line width=2]    (307.85,86.92) -- (332.98,111.77) ;
\draw  [color={rgb, 255:red, 0; green, 0; blue, 0 }  ,draw opacity=1 ][line width=1.5]  (357.98,146.82) -- (369.42,149.99) -- (366.56,137.8) ;
\draw  [color={rgb, 255:red, 0; green, 0; blue, 0 }  ,draw opacity=1 ][line width=1.5]  (370.18,108.03) -- (372.34,95.75) -- (361.09,99.37) ;
\draw [color={rgb, 255:red, 74; green, 144; blue, 226 }  ,draw opacity=1 ][line width=2]    (508.54,166.27) -- (538.71,128.6) ;
\draw [color={rgb, 255:red, 245; green, 166; blue, 35 }  ,draw opacity=1 ][line width=2]    (527.29,113.74) -- (538.71,128.6) ;
\draw [color={rgb, 255:red, 74; green, 144; blue, 226 }  ,draw opacity=1 ][line width=2]    (485.71,112.14) -- (527.29,113.74) ;
\draw [color={rgb, 255:red, 245; green, 166; blue, 35 }  ,draw opacity=1 ][line width=2]    (527.29,113.74) -- (550.94,113.74) ;
\draw [color={rgb, 255:red, 245; green, 166; blue, 35 }  ,draw opacity=1 ][line width=2]    (538.71,128.6) -- (550.94,113.74) ;
\draw [color={rgb, 255:red, 74; green, 144; blue, 226 }  ,draw opacity=1 ][line width=2]    (550.94,113.74) -- (582.74,113.74) ;
\draw  [color={rgb, 255:red, 0; green, 0; blue, 0 }  ,draw opacity=1 ][line width=1.5]  (504.62,104.85) -- (494.64,112.13) -- (504.62,119.42) ;
\draw [color={rgb, 255:red, 74; green, 144; blue, 226 }  ,draw opacity=1 ][line width=2]    (550.94,113.74) -- (572.14,86.14) ;
\draw [color={rgb, 255:red, 74; green, 144; blue, 226 }  ,draw opacity=1 ][line width=2]    (538.71,128.6) -- (567.24,164.15) ;
\draw [color={rgb, 255:red, 74; green, 144; blue, 226 }  ,draw opacity=1 ][line width=2]    (503.24,84.81) -- (527.29,113.74) ;
\draw  [color={rgb, 255:red, 0; green, 0; blue, 0 }  ,draw opacity=1 ][line width=1.5]  (551.22,154.54) -- (562.17,158.23) -- (559.43,144.04) ;
\draw  [color={rgb, 255:red, 0; green, 0; blue, 0 }  ,draw opacity=1 ][line width=1.5]  (562.9,109.38) -- (564.97,95.08) -- (554.2,99.3) ;
\draw [color={rgb, 255:red, 74; green, 144; blue, 226 }  ,draw opacity=1 ][line width=2]  [dash pattern={on 3.38pt off 3.27pt}]  (81.88,92.23) .. controls (29.26,71.46) and (71.92,24.31) .. (101.02,61.05) ;
\draw [color={rgb, 255:red, 208; green, 2; blue, 27 }  ,draw opacity=1 ][line width=2]  [dash pattern={on 3.38pt off 3.27pt}]  (170.9,151.55) .. controls (199.19,190.12) and (76.9,193.27) .. (106.8,153.97) ;
\draw [color={rgb, 255:red, 126; green, 211; blue, 33 }  ,draw opacity=1 ][line width=2]  [dash pattern={on 3.38pt off 3.27pt}]  (176.24,62.56) .. controls (211.99,15.66) and (243.27,93.46) .. (187.81,94.04) ;
\draw [color={rgb, 255:red, 208; green, 2; blue, 27 }  ,draw opacity=1 ][line width=2]  [dash pattern={on 3.38pt off 3.27pt}]  (374.72,155.07) .. controls (401.79,184.12) and (284.78,186.48) .. (313.39,156.9) ;
\draw [color={rgb, 255:red, 74; green, 144; blue, 226 }  ,draw opacity=1 ][line width=2]  [dash pattern={on 3.38pt off 3.27pt}]  (379.83,88.06) .. controls (414.72,53.94) and (275.25,56.89) .. (307.85,86.92) ;
\draw [color={rgb, 255:red, 74; green, 144; blue, 226 }  ,draw opacity=1 ][line width=2]  [dash pattern={on 3.38pt off 3.27pt}]  (390.91,111.77) .. controls (525.61,22.57) and (171.85,23.76) .. (289.54,110.4) ;
\draw [color={rgb, 255:red, 74; green, 144; blue, 226 }  ,draw opacity=1 ][line width=2]  [dash pattern={on 3.38pt off 3.27pt}]  (572.14,86.14) .. controls (605.53,46.41) and (468.78,46.41) .. (503.24,84.81) ;
\draw [color={rgb, 255:red, 74; green, 144; blue, 226 }  ,draw opacity=1 ][line width=2]  [dash pattern={on 3.38pt off 3.27pt}]  (567.24,164.15) .. controls (602.92,205.54) and (645.9,113.23) .. (582.74,113.74) ;
\draw [color={rgb, 255:red, 74; green, 144; blue, 226 }  ,draw opacity=1 ][line width=2]  [dash pattern={on 3.38pt off 3.27pt}]  (485.71,112.14) .. controls (435.57,113.23) and (463.57,211.05) .. (508.54,166.27) ;

\end{tikzpicture}
    \caption{Configurations I, II, III for the triple point move of alternating orientations.}
    \label{fig:alternating}
\end{figure}

\subsection{Proof of \texorpdfstring{\cref{lem:switch}}{Lemma 4.2}}
We perform the sequence of triple point move switching the first two strands, as described in the previous section. More carefully, we pass the first strand alternatingly across the vertical array (from bottom to top) and the horizontal array (from left to right) of the intersection points (as marked blue in \cref{fig:10110_move}). For example, in \cref{fig:10110_move}, the first pair of moves would pass strand $1$ across the two intersections between strand $2$ and strand $3$. Now \cref{lem:switch} is implied by the following claim.

\textbf{Claim:} The number of Hitomezashi loops modulo $4$ does not change under every pair of triple point moves (i.e. the two moves that pass the first strand across the two intersections between the second strand with one of the rest strands) performed.

\begin{proof}[Proof of claim]
Note that the $\ZZ/2$-symmetry mentioned at the end of \cref{sec:annulus} is preserved under each pair of moves. Therefore, the two sets of orientations for a pair of triple point moves are both adjacent or both alternating. If they are both adjacent, we are done by Lemma~\ref{lem:adjacent}.

Suppose now they are both alternating. Then the two triple point moves have the same configuration type as shown in \cref{fig:alternating}. Each of the two local pictures has six endpoints, and these twelve endpoints are paired up by six Hitomezashi paths (see \cref{fig:six_pairs}). By the $\ZZ/2$-symmetry, among these six, there must be an even number of \textit{cross} Hitomezashi paths, i.e. paths that pair up two endpoints from different local pictures. We divide into three cases.

\textbf{Case 1} There is no cross Hitomezashi path.

In this case, performing one of the triple point moves does not change the configuration type of the other move. Therefore the number of Hitomezashi loops changes by $\pm4$ or $0$ under the pair of moves by \cref{lem:alternating}.

\textbf{Case 2} There are exactly two cross Hitomezashi paths.

Then in each local picture, four of the six endpoints are paired up by non-cross paths. With this observation, we conclude verbatim as in Case 1.

\textbf{Case 3} There are at least four cross Hitomezashi paths.

\input{Figures/six_pairs}

Label six endpoints by $1,2,\cdots,6$ clockwisely in one local picture, and the other six also by $1,2,\cdots,6$ such that the labels are $\ZZ/2$-invariant. By purely topological consideration, if there are cross paths between $1$'s and $2$'s or $1$'s and $6$'s, then there must be exactly two cross paths, which contradicts our assumption. Therefore if the $1$'s are on cross paths, there must be two cross paths between $1$'s and $4$'s. Cyclic permutations of this conclusion also hold. 

If two of the $(1,4)$, $(2,5)$, $(3,6)$ endpoints are paired by cross paths, so is the third. Otherwise, a non-cross path between $3$ and $6$ in one local picture together with a path within this local picture form a loop that divides the annulus into two parts, one of which contains the entire other local picture and half of the current local picture. The other half contains the other half of the current local picture, on which the two vertices cannot be endpoints of cross paths.

Therefore there is exactly one possible configuration (at least when forgetting about how the loops embed into the annulus) as shown on the left of \cref{fig:six_pairs}. In this case, performing the pair of triple point moves does not change the number of Hitomezashi loops.
\end{proof}

\section{Open problems}
\label{sec:problems}
We point out some potentially interesting directions for future study.
\begin{problem}
When $k(x) = k(y) = 0$, we know from \cref{prop:homology} that the possible homology classes of nontrivial Hitomezashi loops in $\Cloth_{M, N}(x, y)$ are $(\pm 1, 0)$ and $(0, \pm 1)$. However, by \cref{obs}(2), loops with homology class $(\pm 1, 0)$ and $(0, \pm 1)$ cannot coexist in the same pattern. Is there a simple criterion on $x, y$ that tells us which homology classes can exist in this case?
\end{problem}
\begin{problem}
We could not formulate a generalization of \cref{prop:loop-count}(2) for non-symmetric Hitomezashi patterns. In particular, the total number of loops modulo $4$ is not a function of $M, N, k(x), k(y)$, as illustrated in \cref{fig:loop_count}. This is still the case if we assume $k(x) \neq 0, k(y) \neq 0$, as the number of loops is congruent to $0$ modulo $4$ in $\Cloth_{8,8}(x,x)$ and $2$ modulo $4$ in $\Cloth_{8,8}(x, y)$ for $x = +-++--++$ and $y = +-++-+-+$. 

Could we find a formula for the number of loops modulo $4$ in a general Hitomezashi pattern $\Cloth_{M, N}(x, y)$?
\end{problem}

\begin{figure}[t]
    \centering
    \begin{minipage}{0.5\textwidth}
        \centering
        \includegraphics[width=0.7\textwidth]{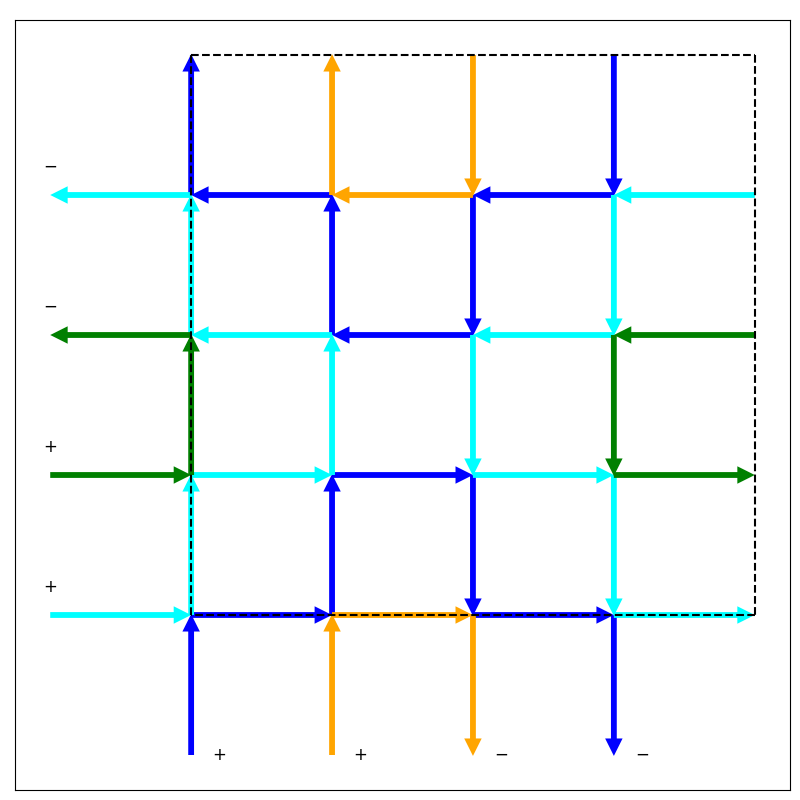} 
    \end{minipage}\hfill
    \begin{minipage}{0.5\textwidth}
        \centering
        \includegraphics[width=0.7\textwidth]{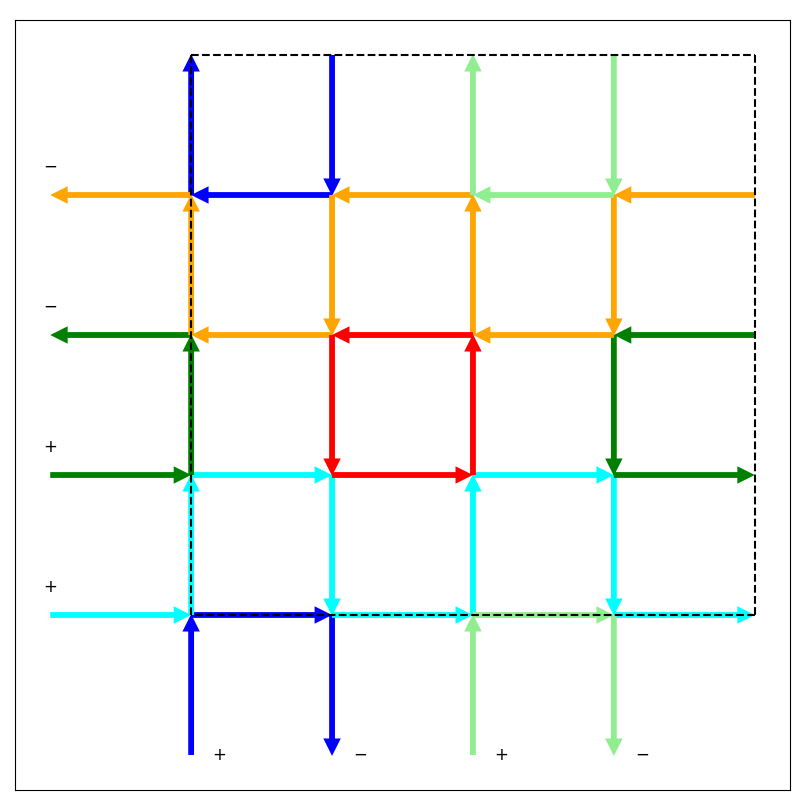} 
    \end{minipage}
    \caption{When $x = ++--$ and $x' = +-+-$, $\Cloth_{4,4}(x, x)$ has $4$ toroidal Hitomezashi loops, while $\Cloth_{4,4}(x, x')$ has $6$.}
    \label{fig:loop_count}
\end{figure}

\begin{problem}
As explained in the introduction, in the case when $M,N$ are both even, for a fixed Hitomezashi pattern $\Cloth_{M,N}(x,y)$, the set of Hitomezashi loops decompose into two parts that are dual to each other. What can one say about the loop count of each part?
\end{problem}
\begin{problem}
Generalize our results to Hitomezashi patterns on an infinite cylindrical grid $\ZZ\times\ZZ/N\ZZ$.
\end{problem}
\begin{problem}
Colin, Defant, and Tenner \cite{defant2023} sketched some reasonable definitions for Hitomezashi patterns on the standard planar triangular grid whose vertex set is given by $T=\{i+j\omega+k\omega^2\colon i,j,k\in\ZZ\}\subset\CC$, where $\omega=e^{2\pi i/3}$. Can one find a reformulation of their definitions in the spirit of Definition~\ref{defn:hitomezashi-toroidal}? What can be said about planar triangular Hitomezashi patterns and toroidal triangular Hitomezashi patterns? A reasonable toroidal triangular grid to consider is $T/NT$ for some integer $N\ge2$.
\end{problem}

\bibliographystyle{amsplain}
\bibliography{bib}

\end{document}